\newcommand{\R}{\mathbb R}
\newcommand{\ZZ}{\mathbb{ZZ}}
\newcommand{\Z}{\mathbb{Z}}
\newtheorem{theorem}{Theorem}
\numberwithin{theorem}{section}
\newtheorem{lemma}[theorem]{Lemma}
\newtheorem{example}[theorem]{Example}
\newtheorem{corollary}[theorem]{Corollary}
\newtheorem{definition}[theorem]{Definition}
\newtheorem*{theorem*}{Theorem}
\date{January 21, 2020}
\begin{document}

\title{Stability of interval decomposable persistence modules}
\author{H\aa vard Bakke Bjerkevik}

\maketitle

\begin{abstract}
The algebraic stability theorem for $\mathbb{R}$-persistence modules is a fundamental result in topological data analysis. We present a stability theorem for $n$-dimensional rectangle decomposable persistence modules up to a constant $(2n-1)$ that is a generalization of the algebraic stability theorem, and also has connections to the complexity of calculating the interleaving distance. The proof given reduces to a new proof of the algebraic stability theorem with $n=1$. We give an example to show that the bound cannot be improved for $n=2$. We apply the same technique to prove stability results for zigzag modules and Reeb graphs, reducing the previously known bounds to a constant that cannot be improved, settling these questions.
\end{abstract}

\section{Introduction}
\label{intro}
Persistent homology is a tool in topological data analysis used to determine the structure or shape of data sets. For example, given a point cloud $X \subset \mathbb{R}^n$ sampled from a subspace $S$ of $\mathbb{R}^n$, we want to guess at the homology of $S$, which tells us something about how many ``holes'' $S$ has in various dimensions. We can do this by defining $B(\epsilon)$ to be the union of the (open or closed) balls of radius $\epsilon$ centered at each point in $X$. Calculating homology, we get a group or vector space $H_n(B(\epsilon))$ for each $\epsilon \geq 0$, and the inclusions $B(\epsilon) \hookrightarrow B(\epsilon')$ induce morphisms $H_n(B(\epsilon)) \rightarrow H_n(B(\epsilon'))$ for $\epsilon \leq \epsilon'$. Such a collection of vector spaces and morphisms is called a \textit{persistence module}, or $\R$-module, as the vector spaces are parametrized over $\R$. Under certain assumptions, we can decompose an $\R$-module into \textit{interval modules} \cite{crawley}, which gives us a set of intervals uniquely determining the persistence module up to isomorphism. This set of intervals is the \textit{barcode} of the persistence module. The intervals in the barcode are interpreted as corresponding to possible features of the space $S$, where one might interpret long intervals as more likely to describe actual features of $S$ and short intervals as more likely to be the result of noise in the input data. In other words, we have an algorithm with a data set as input and a barcode as output. As data sets always carry a certain amount of noise, we would like this algorithm to be \textit{stable} in the sense that a little change in the input data, or in the persistence modules, should not result in a big change in the barcode.

We measure the difference between persistence modules with the \textit{interleaving distance} $d_I$, and the difference between barcodes with the \textit{bottleneck distance} $d_B$. Proving stability then becomes a question of proving that the bottleneck distance is bounded by the interleaving distance, i.e. $d_B \leq Cd_I$ for some constant $C$. Stability has been proved for persistence modules over $\mathbb{R}$ in \cite{cohen07,proximity,structure,induced} in what is called the \textit{algebraic stability theorem}, which implies the \textit{isometry theorem} $d_I = d_B$.

Persistence modules can also be parametrized over other posets. A pair of filtrations $f,g: S \to \R$ of a topological space $S$ gives rise to an $\R^2$-module which has a vector space $V_p$ for each point in $p \in\R^2$ and linear maps $V_{(a,b)} \to V_{(c,d)}$ whenever $a\leq c$ and $b \leq d$, for instance by letting $V_{(a,b)}$ be $H_n(f^{-1}(-\infty,a) \cap g^{-1}(-\infty,b))$. Again, inclusions induce the linear maps on homology. With $n$ filtrations instead of $2$, we get an $\R^n$-module. Another example is \emph{zigzag modules}, which are popular objects of study in topological data analysis \cite{carlsson2010zigzag,oudot2015zigzag,kim2017stable}. These can arise from a sequence of subspaces $S_i \subset S$, where we also consider the intersections $S_i \cap S_{i+1}$ (or unions). In this case, we have $$\dots \subseteq S_{i-1} \supseteq S_{i-1} \cap S_i \subseteq S_i \supseteq S_i \cap S_{i+1} \subseteq \dots,$$ which again gives rise to linear maps on homology. Defining interleavings and thus the interleaving distance is trickier than for $\R$-modules, but in fact one can do this by associating $\R^2$-modules called \emph{block decomposable modules} to the zigzag modules. One can also associate block decomposable modules to \emph{Reeb graphs}, which are of interest because of their ability to present geometrical information despite being relatively simple objects. See Section \ref{zigReeb}.

All these examples serve as motivation for why one would like to talk about stability for multi-parameter modules (that is, persistence modules parametrized over $\R^n$ for $n\geq 2$). Unfortunately, no isometry theorem is possible even for general $\R^2$-modules, because there is no nice decomposition theorem like in the one-parameter cases, meaning that $d_B$ is not defined. The block decomposable modules, however, decompose nicely, and $d_B \leq \frac{5}{2}d_I$ has been shown for these \cite{zigzag}. This carries over to stability results for zigzag modules and Reeb graphs.

Our main contribution is a new method of proving stability for interval decomposable modules. We demonstrate several applications of this method. The first is \cref{main}:

\begin{theorem*}
Let $M = \bigoplus_{I \in B(M)} \mathbb{I}^I$ and $N = \bigoplus_{J \in B(N)} \mathbb{I}^J$ be rectangle decomposable $\mathbb{R}^n$-modules. If $M$ and $N$ are $\delta$-interleaved, there exists a $(2n-1)\delta$-matching between $B(M)$ and $B(N)$.
\end{theorem*}

This is a generalization of the algebraic stability theorem for $\R$-modules, which is the case $n=1$. For $n\geq 2$, the result is new. There already exist several proofs of the algebraic stability theorem, but our approach is different from the ones taken before, which allows this more general theorem, as well as the results below. Our method is combinatorial, which in our opinion reflects the true nature of the problem once some of the algebraic technicalities are stripped away. Also, our proof is fairly short in the case $n=1$ compared to earlier proofs of the algebraic stability theorem. In \cref{example3}, we construct rectangle decomposable modules $M$ and $N$ over $\R^2$ for which $d_I(M,N)=1$ and $d_B(M,N)=3$, disproving a conjecture made in an earlier version of \cite{zigzag} claiming that $d_B(M,N) = d_I(M,N)$ holds for all $n$-dimensional interval decomposable modules $M$ and $N$ whose barcodes only contain convex intervals. The example also shows that the bound in the theorem cannot be improved for $n=2$. It is an open question if the bound can be improved for $n\geq 3$.

We do not know of any examples of rectangle decomposable modules arising naturally from real-world data sets. But as we discuss in \cref{complexity}, there is a strong link between the stability of these modules and the recent proof that calculating the interleaving distance between multi-parameter modules is NP-hard \cite{bjerkevik2018computing}. In particular, our way of viewing interleavings as pairs of matrices and our observation in \cref{example3} that the interleaving and bottleneck distances differ for rectangle decomposable modules served as inspiration for the approach used in \cite{bjerkevik2018computing}. The question of whether the hardness results can be strengthened is closely related to the question of whether \cref{main} can be improved. Thus, even if rectangle decomposable modules never arise directly from data sets, the type of questions we consider can have an impact on practical applications.

Another reason why we give the proof in detail for rectangle decomposable modules instead of, say, block decomposable modules, is that this case demonstrates very well exactly when our method works and when it fails. The lesson to take home is that the method gives a bound $d_B \leq cd_I$ with a $c$ that increases with the freedom we have in defining the intervals we consider. You need $2n$ coordinates to define an $n$-dimensional (hyper)rectangle, which gives a constant $2n-1$ in the theorem.

Another application of our proof method gives \cref{triangle}:
\begin{theorem*}
Let $M$ and $N$ be $\delta$-interleaved triangle decomposable modules. Then there is a $\delta$-matching between $B(M)$ and $B(N)$.
\end{theorem*}
This is more immediately connected to practical applications. \cref{triangle} implies $d_B\leq d_I$ for block decomposable modules, which is an improvement on the previous best known bound, $d_B\leq \frac{5}{2}d_I$. Since the opposite inequality $d_I\leq d_B$ holds trivially, our bound is the best possible. We discuss how stability results for zigzag modules and Reeb graphs follow in \cref{zigReeb}. The fact that our bound is optimal means that these stability problems are now settled.

We finish off \cref{rectangles} by showing stability for free modules.

We assume that all modules are pointwise finite dimensional (p.f.d.). In a previous version of this paper \cite{myself}, we strengthened the theorems by removing this assumption.

\section{Persistence modules, interleavings, and matchings}
\label{defs}
In this section we introduce some basic notation and definitions that we will use throughout the paper. Let $k$ be a field that stays fixed throughout the text, and let $\textbf{vec}$ be the category of finite dimensional vector spaces over $k$. We identify a poset with its poset category, which has the elements of the poset as objects, a single morphism $p\to q$ if $p\leq q$ and no morphism if $p\nleq q$.

\begin{definition}
Let $P$ be a poset category. A $P$-\textbf{persistence module} is a functor $P \rightarrow \textbf{vec}$.
\end{definition}

If the choice of poset is obvious from the context, we usually write `persistence module' or just `module' instead of `$P$-persistence module'. 

For a persistence module $M$ and $p \leq q \in P$, $M(p)$ is denoted by $M_p$ and $M(p \rightarrow q)$ by $\phi_M(p,q)$. We refer to the morphisms $\phi_M(p,q)$ as the \textit{internal morphisms} of $M$.~$M$ being a functor implies that $\phi_M(p,p) = id_{M_p}$, and that $\phi_M(q,r) \circ \phi_M(p,q) = \phi_M(p,r)$. Because the persistence modules are defined as functors, they automatically assemble into a category where the morphisms are natural transformations. This category is denoted by $P\textbf{-mod}$. Let $f: M \rightarrow N$ be a morphism between persistence modules. Such an $f$ consists of a morphism associated to each $p \in P$, and these morphisms are denoted by $f_p$. Because $f$ is a natural transformation, we have $\phi_N(p,q) \circ f_p = f_q \circ \phi_M(p,q)$ for all $p \leq q$.

\begin{definition}
An \textbf{interval} is a subset $\varnothing \neq I \subseteq P$ that satisfies the following:
\begin{itemize}
\item If $p,q \in I$ and $p \leq r \leq q$, then $r \in I$.
\item If $p,q \in I$, then there exist $p_1, p_2, \dots, p_{2m} \in I$ for some $m \in \mathbb{N}$ such that $p \leq p_1 \geq p_2 \leq \dots \geq p_{2m} \leq q$.
\end{itemize}
\end{definition}
We refer to the last point as the \textit{connectivity axiom} for intervals.

\begin{definition}
An \textbf{interval persistence module} or \textbf{interval module} is a persistence module $M$ that satisfies the following: for some interval $I$,~$M_p = k$ for $p \in I$ and $M_p = 0$ otherwise, and $\phi_M(p,q) = \textrm{Id}_k$ for points $p \leq q$ in $I$. We use the notation $\mathbb{I}^J$ for the interval module with $J$ as its underlying interval.
\end{definition}

The definitions up to this point have been valid for all posets $P$, but we need some additional structure on $P$ to get a notion of distance between persistence modules, which is essential to prove stability results. Since we will mostly be working with $\mathbb{R}^n$-persistence modules, we restrict ourselves to this case from now on. We define the poset structure on $\mathbb{R}^n$ by letting $(a_1, a_2, \dots, a_n) \leq (b_1, b_2, \dots, b_n)$ if and only if $a_i \leq b_i$ for $1 \leq i \leq n$. For $\epsilon \in \mathbb{R}$, we often abuse notation and write $\epsilon$ when we mean $(\epsilon,\epsilon,\dots,\epsilon) \in \mathbb{R}^n$. We call an interval $I \subset \mathbb{R}^n$ \textit{bounded} if it is bounded as a subset of $\mathbb{R}^n$ in the usual sense. That is, it is contained in a ball with finite radius.

\begin{definition}
For $\epsilon \in [0,\infty)$, we define the \textbf{shift functor} $(\cdot)(\epsilon): \mathbb{R}^n \textbf{-mod} \rightarrow \mathbb{R}^n \textbf{-mod}$ by letting $M(\epsilon)$ be the persistence module with $M(\epsilon)_p = M_{p+\epsilon}$ and $\phi_{M(\epsilon)}(p,q) = \phi_M(p+\epsilon,q+\epsilon)$. For morphisms $f: M \rightarrow N$, we define $f(\epsilon): M(\epsilon) \rightarrow N(\epsilon)$ by $f(\epsilon)_p = f_{p+\epsilon}$.
\end{definition}

We also define shift on intervals $I$ by letting $I(\epsilon)$ be the interval for which $\mathbb{I}^{I(\epsilon)} = \mathbb{I}^I(\epsilon)$.

Define the morphism $\phi_{M,\epsilon}: M \rightarrow M(\epsilon)$ by $(\phi_{M,\epsilon})_p = \phi_M(p,p+\epsilon)$.

\begin{definition}
An \textbf{$\epsilon$-interleaving} between $\mathbb{R}^n$-modules $M$ and $N$ is a pair of morphisms $f: M \rightarrow N(\epsilon)$,~$g: N \rightarrow M(\epsilon)$ such that $g(\epsilon) \circ f = \phi_{M,2\epsilon}$ and $f(\epsilon) \circ g = \phi_{N,2\epsilon}$.
\end{definition}

If there exists an $\epsilon$-interleaving between $M$ and $N$, then $M$ and $N$ are said to be $\epsilon$-interleaved. An interleaving can be viewed as an `approximate isomorphism', and a $0$-interleaving is in fact an isomorphism. We call a module $M$ \textit{$\epsilon$-significant} if $\phi_M(p,p+\epsilon) \neq 0$ for some $p$, and \textit{$\epsilon$-trivial} otherwise.~$M$ is $2\epsilon$-trivial if and only if it is $\epsilon$-interleaved with the zero module. We call an interval $I$ $\epsilon$-significant if $\mathbb{I}^I$ is $\epsilon$-significant, and $\epsilon$-trivial otherwise.

\begin{definition}
We define the \textbf{interleaving distance} $d_I$ on persistence modules $M$ and $N$ by

\begin{equation}
d_I(M,N) = \inf\{\epsilon \mid M \textrm{ and } N \textrm{ are } \epsilon\textrm{-interleaved}\}.
\end{equation}
\end{definition}
The interleaving distance intuitively measures how close the modules are to being isomorphic. The interleaving distance between two modules might be infinite, and the interleaving distance between two different, even non-isomorphic modules, might be zero. Apart from this, $d_I$ satisfies the axioms for a metric, so $d_I$ is an extended pseudometric.

\begin{definition}
Suppose $M \cong \bigoplus_{I \in B} \mathbb{I}^I$ for a multiset\footnote{We will not be rigorous in our treatment of multisets. A multiset may contain multiple copies of one element, but we will assume that we have some way of separating the copies, so that we can treat the multiset as a set. If e.g.~$I$ and $J$ are intervals in a multiset and we say that $I \neq J$, we mean that they are ``different'' elements of the multiset, not that they are different intervals.} $B$ of intervals. Then we call $B$ the \textbf{barcode} of $M$, and write $B = B(M)$. We say that $M$ is \textbf{interval decomposable}.
\end{definition}

Since the endomorphism ring of any interval module is isomorphic to $k$, it follows from Theorem $1$ in \cite{azumaya} that if a persistence module $M$ is interval decomposable, the decomposition is unique up to isomorphism. Thus the barcode is well-defined, even if we let $M$ be a $P$-module for an arbitrary poset $P$. If $M$ is a p.f.d.~$\mathbb{R}$-module, it is interval decomposable \cite{crawley}, but this is not true for $\mathbb{R}$-modules or p.f.d.~$\mathbb{R}^n$-modules in general. \cite{webb} gives an example showing the former, and the following is an example of a $P$-module for a poset $P$ with four points that is not interval decomposable.
\begin{equation}
\begin{tikzpicture}[baseline=(current  bounding  box.center)]
\matrix(m)[matrix of math nodes, row sep=4em, column sep=4em, text height=1.5ex, text depth=0.25ex]
{k&k^2&k\\
&k&\\};
\path[->,font=\scriptsize,>=angle 90]
(m-1-1) edge node[auto] {$\begin{pmatrix}1 & 0\end{pmatrix}$} (m-1-2)
(m-1-2) edge node[auto] {$\begin{pmatrix}1 \\ 1\end{pmatrix}$} (m-1-3)
(m-2-2) edge node[auto] {$\begin{pmatrix}0 & 1\end{pmatrix}$} (m-1-2);
\end{tikzpicture}
\end{equation}
A corresponding $\mathbb{R}^2$-module that is not interval decomposable and is at most two-dimensional at each point can be constructed.

For multisets $A,B$, we define a \textit{partial bijection} as a bijection $\sigma: A' \rightarrow B'$ for some subsets $A' \subset A$ and $B' \subset B$, and we write $\sigma: A \nrightarrow B$. We write $\textrm{coim } \sigma = A'$ and $\textrm{im } \sigma = B'$.

\begin{definition}
Let $A$ and $B$ be multisets of intervals. An \textbf{$\epsilon$-matching} between $A$ and $B$ is a partial bijection $\sigma: A \nrightarrow B$ such that
\begin{itemize}
\item all $I \in A \setminus \textrm{coim } \sigma$ are $2\epsilon$-trivial
\item all $I \in B \setminus \textrm{im } \sigma$ are $2\epsilon$-trivial
\item for all $I \in \textrm{coim } \sigma$, $\mathbb{I}^I$ and $\mathbb{I}^{\sigma(I)}$ are $\epsilon$-interleaved.
\end{itemize}
\end{definition}
If there is an $\epsilon$-matching between $B(M)$ and $B(N)$ for persistence modules $M$ and $N$, we say that $M$ and $N$ are \textit{$\epsilon$-matched}.

We have adopted this definition of $\epsilon$-matching from \cite{zigzag}, which differs from e.g. the one in \cite{structure}, which allows two intervals $I$ and $J$ to be matched if $d_I(\mathbb{I}^I,\mathbb{I}^J) \leq \epsilon$ (or rather, this is equivalent to their definition). Conveniently, with the definition we have chosen, an $\epsilon$-interleaving is easily constructed given an $\epsilon$-matching. We feel that this is the more natural definition for this paper, as several of our results are phrased as statements about matchings and interleavings, and the interleaving distance might not come into the picture at all. The other definition is perhaps more natural in the context of `persistence diagrams', where intervals are identified with points in a diagram, and the interleaving distance between the corresponding modules is simply the distance between the points. This is irrelevant to us, however, as we never consider persistence diagrams.

We can also define $\epsilon$-matchings in the context of graph theory. A \textit{matching in a graph} is a set of edges in the graph without common vertices, and a matching is said to \textit{cover} a set $S$ of vertices if all elements in $S$ are adjacent to an edge in the matching. Let $G_\epsilon$ be the bipartite graph on $A \sqcup B$ with an edge between $I \in A$ and $J \in B$ if $\mathbb{I}^I$ and $\mathbb{I}^J$ are $\epsilon$-interleaved. Then an $\epsilon$-matching between $A$ and $B$ is a matching in $G_\epsilon$ such that the set of $2\epsilon$-significant intervals in $A \sqcup B$ is covered.

\begin{definition}
The \textbf{bottleneck distance} $d_B$ is defined by
\begin{equation}
d_B(M,N) = \inf\{\epsilon \mid M \textrm{ and } N \textrm{ are } \epsilon \textrm{-matched}\}
\end{equation}
for any interval decomposable $M$ and $N$.
\end{definition}
We might abuse notation and talk about $d_B(C,D)$, where $C$ and $D$ are barcodes.

\section{Zigzag modules and Reeb graphs}
\label{zigReeb}

In this section we will give some intuition for how block decomposable modules relate to Reeb graphs and zigzag modules. We refer to \cite{zigzag} for a more detailed and rigorous treatment.

When explaining the connection to Reeb graphs and zigzag modules, it is more convenient to flip one of the axes in $\R^2$, so that we work with $\R^{\text{op}} \times \R$ instead. This way, $(a,b) \leq (c,d)$ iff $c \leq a$ and $b \leq d$, or, equivalently, if $(a,b) \subseteq (c,d)$ as intervals, assuming $a<b$. Let $\mathbb U = \{(a,b) \in \R^{\text{op}} \times \R \mid a \leq b \}$.

\begin{definition}
An interval decomposable $\R^{\text{op}} \times \R$-module is called \textbf{block decomposable} if its barcode only contains intervals of the following types:
\begin{itemize}
\item $[a,b]_{\text{BL}} = \{(c,d) \in \mathbb U \mid c \leq b, d \geq a\}$
\item $[a,b)_{\text{BL}} = \{(c,d) \in \mathbb U \mid a \leq d < b\}$
\item $(a,b]_{\text{BL}} = \{(c,d) \in \mathbb U \mid a < c \leq b\}$
\item $(a,b)_{\text{BL}} = \{(c,d) \in \mathbb U \mid c > a, d < b\}$
\end{itemize}
\end{definition}
We call these intervals \emph{blocks}. Each interval intersects the diagonal in an $\R$-interval that is open, closed or half-open one way or the other depending on the type of the block.

\begin{figure}
\centering
\begin{tikzpicture}[scale=.6]
\begin{scope}[xshift=-9cm]
\fill [black,opacity=0.2] (-1,-1)--(-2,-1)--(-2,2)--(1,2)--(1,1);
\draw[<->,thick] (-2,-2) to (2,2);
\node at (-.5,-1.3){$(a,a)$};
\node at (1.5,.7){$(b,b)$};
\end{scope}
\begin{scope}[xshift=-3cm]
\fill [black,opacity=0.2] (-1,-1)--(-2,-1)--(-2,1)--(1,1);
\draw[<->,thick] (-2,-2) to (2,2);
\node at (-.5,-1.3){$(a,a)$};
\node at (1.5,.7){$(b,b)$};
\end{scope}
\begin{scope}[xshift=3cm]
\fill [black,opacity=0.2] (-1,-1)--(-1,2)--(1,2)--(1,1);
\draw[<->,thick] (-2,-2) to (2,2);
\node at (-.5,-1.3){$(a,a)$};
\node at (1.5,.7){$(b,b)$};
\end{scope}
\begin{scope}[xshift=9cm]
\fill [black,opacity=0.2] (-1,-1)--(-1,1)--(1,1);
\draw[<->,thick] (-2,-2) to (2,2);
\node at (-.5,-1.3){$(a,a)$};
\node at (1.5,.7){$(b,b)$};
\end{scope}
\end{tikzpicture}
\caption{The intervals $[a,b]_{\text{BL}}, [a,b)_{\text{BL}}, (a,b]_{\text{BL}}$ and $(a,b)_{\text{BL}}$.\label{blocktypes}}
\end{figure}

\subsection{Reeb graphs}

There have been proposed several distances on Reeb graphs; see \cite{bauer2014measuring} for a summary, as well as references to various applications. The interleaving distance we consider was introduced in \cite{de2016categorified}.

A Reeb graph is a topological graph $G$ together with a continuous function $\gamma: G \to \R$ such that the level sets of $\gamma$ are discrete. Let $S(\gamma) = \R^{\text{op}} \times \R \to \textbf{Set}$ be the functor sending $(a,b)$ to the set of connected components of $\gamma^{-1}(a,b)$ and $S((a,b) \to (c,d))$ be induced by the inclusion $\gamma^{-1}(a,b) \subseteq \gamma^{-1}(c,d)$ for $c \leq a \leq b \leq d$. In Figure \ref{Reeb}, $\gamma$ is projection to a horizontal axis. Above the graph, the functor $S(\gamma)$ is shown, the shade of grey at $(a,b)$ determined by the size of $S(\gamma)_{(a,b)}$.

Given two Reeb graphs $(G_1,\gamma_1)$ and $(G_2,\gamma_2)$, we get two functors $S(\gamma_1)$ and $S(\gamma_2)$, and we can talk about interleavings and interleaving distance by adjusting the definitions in the previous section. It turns out that this interleaving distance is at least as big as the one we get by replacing $S(\gamma_1)$ and $S(\gamma_2)$ by corresponding block decomposable modules $M_{\gamma_1}$ and $M_{\gamma_2}$. In Figure \ref{Reeb}, the blocks comprising this block decomposable module are exactly what you would guess by looking at the figure. In \cite{zigzag}, $d_B(M,N) \leq \frac{5}{2} d_I(M,N)$ is proved for such modules; with Theorem \ref{thmBlock}, we have $d_B(M,N) = d_I(M,N)$.

There is also a barcode $L_0(\gamma)$ of $\R$-intervals (the level set persistence diagram \cite{carlsson2009zigzag}) associated to a Reeb graph $(G,\gamma)$, which we can think of as arising from the intersection of $S(\gamma)$ with the diagonal $x=y$. This barcode is the same as $B(M_\gamma)$, except that $(a,b]_{BL}$ is replaced by $(a,b]$, and so on. It is not too hard to see that $d_B(L_0(\gamma_1),L_0(\gamma_2)) \leq 2d_B(M_{\gamma_1},M_{\gamma_2})$.\footnote{The reason for the constant $2$ is that $(a,b)_{BL}$ is $(b-a)/4$-trivial, while $(a,b)$ is not $\epsilon$-trivial for $\epsilon < (b-a)/2$.} Altogether, this gives
\begin{align*}
d_B(L_0(\gamma_1),L_0(\gamma_2)) &\leq 2d_B(M_{\gamma_1},M_{\gamma_2})\\
&= 2d_I(M_{\gamma_1},M_{\gamma_2})\\
&\leq 2d_I(S(\gamma_1),S(\gamma_2)).
\end{align*}
In other words:
\begin{theorem}
\label{thmReeb}
For Reeb graphs $\gamma_1$, $\gamma_2$, the inequality $d_B(L_0(\gamma_1),L_0(\gamma_2)) \leq 2d_I(S(\gamma_1),S(\gamma_2))$ holds.
\end{theorem}
Thus an easily computed bottleneck distance gives a lower bound for the interleaving distance between Reeb graphs. This improves the result in \cite{zigzag}, which was again an improvement on \cite{bauer2014strong}, by lowering the constant in the inequality from $5$ to $2$, and this cannot be improved.
\begin{figure}
\centering
\begin{tikzpicture}[scale=.6]
\fill [black,opacity=0.2] (-2,-2)--(-4,-2)--(-4,11)--(9,11)--(9,9);
\fill [black,opacity=0.2] (-1,-1)--(-4,-1)--(-4,0)--(0,0);
\fill [black,opacity=0.2] (2,2)--(2,5)--(5,5);
\fill [black,opacity=0.2] (6,6)--(6,11)--(8,11)--(8,8);
\draw[<->,thick] (-4,-4) to (11,11);
\node at (12,11){$x=y$};
\node at (3,4){$\bullet$};
\draw[dotted] (3,4) to (4,4);
\draw[dotted] (3,4) to (3,3);
\fill [red,opacity=0.4] (3,3)--(4,4)--(4,-5)--(3,-5);
\begin{scope}[yshift=-4cm]
\draw[thick] (-2,1) to (0,0);
\draw[thick] (-1,-1) to (0,0);
\draw[thick] (0,0) to (2,0);
\draw[thick] (2,0) to [out=70,in=110] (5,0);
\draw[thick] (2,0) to [out=290,in=250] (5,0);
\draw[thick] (5,0) to (6,0);
\draw[thick] (6,0) to (9,1);
\draw[thick] (6,0) to (8,-.5);
\draw[dotted] (0,0) to (0,4);
\draw[dotted] (-1,-1) to (-1,3);
\draw[dotted] (-2,1) to (-2,2);
\draw[dotted] (2,0) to (2,6);
\draw[dotted] (5,0) to (5,9);
\draw[dotted] (6,0) to (6,10);
\draw[dotted] (8,-.5) to (8,12);
\draw[dotted] (9,1) to (9,13);
\end{scope}
\end{tikzpicture}
\caption{A Reeb graph $(G,\gamma)$ with $S(\gamma)$ above. Evaulating $S(\gamma)$ at the point shown, we get the intersection of $G$ with the red strip, which has two connected components. \label{Reeb}}
\end{figure}
\subsection{Zigzag modules}
A zigzag module is a module over $\ZZ = \{(a,b) \in \Z^2 \mid a=b \vee a=b+1\}$ taken as a sub-poset of $\R^{\text{op}} \times \R$. Let $\ZZ|_{(a,b)}$ be the sub-poset of $\ZZ$ containing the elements $\{(c,d) \in \ZZ \mid a \leq c, d \leq b \}$. A zigzag module $M$ gives rise to a block decomposable module $M_{BL}$ defined by letting $M_{BL}(a,b)$ be the colimit of the restriction of $M$ to $\ZZ|_{(a,b)}$. $M_{BL}((a,b)\to (c,d))$ is defined to be the induced morphism we get by the universal property of colimits for $(a,b) \leq (c,d)$. (This definition is given in \cite{zigzag}, but something very similar is described in the discussions of pyramids in \cite{carlsson2009zigzag} and \cite{bendich2013homology}.) This way, we can define interleaving and bottleneck distance between zigzag modules by letting $d_I(M,N) = d_I(M_{BL},N_{BL})$ and $d_B(M,N) = d_B(M_{BL},N_{BL})$. Thus Theorem \ref{thmBlock} holds if we replace `block decomposable modules' by `zigzag modules':
\begin{theorem}
\label{thmZigzag}
Let $M$ and $N$ be zigzag modules. If $M$ and $N$ are $\delta$-interleaved, there exists a $\delta$-matching between $B(M)$ and $B(N)$.
\end{theorem}
This implies an isometry theorem for zigzag modules: $d_I(M,N) = d_B(M,N)$.

\section{Higher-dimensional stability}
\label{rectangles}

The algebraic stability theorem for $\mathbb{R}$-modules states that an $\epsilon$-interleaving between $\mathbb{R}$-modules $M$ and $N$ induces an $\epsilon$-matching between $B(M)$ and $B(N)$, implying $d_I(M,N) = d_B(M,N)$, the isometry theorem. The main purpose of this paper is to find out when similar results for $\mathbb{R}^n$-modules hold. Our first result, Theorem \ref{main}, is a generalization of the algebraic stability theorem for $\mathbb{R}$-modules. Variations of the algebraic stability theorem have been proved several times already \cite{cohen07,proximity,structure,induced}, but this is a new proof with ideas that are applicable to more than just $\mathbb{R}$-modules.

\subsection{Rectangle decomposable modules}

For any interval $I \subset \mathbb{R}^n$, we let its projection on the $i$'th coordinate be denoted by $I_i$.

\begin{definition}
A \textbf{rectangle} is an interval of the form $R = R_1 \times R_2 \times \dots \times R_n$.
\end{definition}

Two rectangles $R$ and $S$ are of the \textit{same type} if $R_i \setminus S_i$ and $S_i \setminus R_i$ are bounded for every $i$. For $n=1$, we have four types of rectangles:
\begin{itemize}
\item intervals of finite length
\item intervals of the form $(a,\infty)$ or $[a,\infty)$
\item intervals of the form $(-\infty,a)$ or $(-\infty,a]$
\item $(-\infty,\infty)$,
\end{itemize}
for some $a \in \mathbb{R}$. We see that for $n \geq 1$, rectangles $R$ and $S$ are of the same type if $R_i$ and $S_i$ are of the same type for all $1 \leq i \leq n$. Examples of $2$-dimensional rectangles are given in Figure \ref{fig:rectangletypes}.

\begin{figure}
\centering
\includegraphics[scale=0.4]{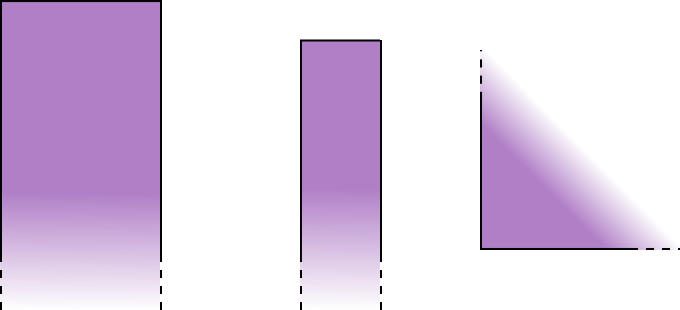}
\caption{Three rectangles, where the left and middle rectangles are of the same type (unbounded downwards), while the last is of a different type (unbounded upwards and to the right). Assuming that it contains its boundary, the rightmost rectangle is also an example of a free interval, which we will define in a later section.\label{fig:rectangletypes}}
\end{figure}

In \cite{structure}, \textit{decorated numbers} were introduced. These are endpoints of intervals `decorated' with a plus or minus sign depending on whether the endpoints are included in the interval or not. Let $\overline{\mathbb{R}} = \mathbb{R} \cup \{ -\infty, \infty\}$. A decorated number is of the form $a^+$ or $a^-$, where $a \in \overline{\mathbb{R}}$.\footnote{The decorated numbers $-\infty^-$ and $\infty^+$ are never used, as no interval contains points at infinity, but it does not matter whether we include these two points in the definition.} The notation is as follows for $a,b \in \overline{\mathbb{R}}$:
\begin{itemize}
\item $I = (a^+, b^+)$ if $I = (a, b]$
\item $I = (a^+, b^-)$ if $I = (a, b)$
\item $I = (a^-, b^+)$ if $I = [a, b]$
\item $I = (a^-, b^-)$ if $I = [a, b)$.
\end{itemize}
We define decorated points in $n$ dimensions for $n \geq 1$ as tuples $a = (a_1, a_2, \dots, a_n)$, where all the $a_i$'s are decorated numbers. For an $n$-dimensional rectangle $R$ and decorated points $(a_1, a_2, \dots, a_n)$ and $(b_1, b_2, \dots, b_n)$, we write $R = ((a_1, a_2, \dots, a_n),$ $(b_1, b_2, \dots, b_n))$ if $R_i = (a_i, b_i)$ for all $i$. We define $\textrm{min}_R$ and $\textrm{max}_R$ as the decorated points for which $R = (\textrm{min}_R, \textrm{max}_R)$. We write $a^*$ for decorated numbers with unknown `decoration', so $a^*$ is either $a^+$ or $a^-$.

There is a total order on the decorated numbers given by $a^* < b^*$ for $a < b$, and $a^- < a^+$ for all $a,b \in \overline{\mathbb{R}}$. This induces a poset structure on decorated $n$-dimensional points given by $(a_1, a_2, \dots, a_n) \leq (b_1, b_2, \dots, b_n)$ if $a_i \leq b_i$ for all $i$. We can also add decorated numbers and real numbers by letting $a^+ + x = (a + x)^+$ and $a^- + x = (a + x)^-$ for $a \in \overline{\mathbb{R}}$, $x \in \mathbb{R}$. We add $n$-dimensional decorated points and $n$-tuples of real numbers coordinatewise.

If $M$ is an interval decomposable $\mathbb{R}^n$-module and all $I \in B(M)$ are rectangles,~$M$ is \textit{rectangle decomposable}.

Our goal is to prove the following theorem:

\begin{theorem}
\label{main}
Let $M = \bigoplus_{I \in B(M)} \mathbb{I}^I$ and $N = \bigoplus_{J \in B(N)} \mathbb{I}^J$ be rectangle decomposable $\mathbb{R}^n$-modules. If $M$ and $N$ are $\delta$-interleaved, there exists a $(2n-1)\delta$-matching between $B(M)$ and $B(N)$.
\end{theorem}
The inequality $d_B(M,N) \leq (2n-1)d_I(M,N)$ for rectangle decomposable modules $M$ and $N$ immediately follows.

Fix $0 \leq \delta \in \mathbb{R}$. Assume that $M$ and $N$ are $\delta$-interleaved, with interleaving morphisms $f: M \rightarrow N(\delta)$ and $g: N \rightarrow M(\delta)$. Recall that this means that $g(\delta) \circ f = \phi_{M, 2 \delta}$ and $f(\delta) \circ g = \phi_{N, 2 \delta}$. For any $I \in B(M)$, we have a canonical injection $\mathbb{I}^I \xrightarrow{\iota_I} M$ and projection $M \xrightarrow{\pi_I} \mathbb{I}^I$, and likewise, we have canonical morphisms $\mathbb{I}^J \xrightarrow{\iota_J} N$ and $N \xrightarrow{\pi_J} \mathbb{I}^J$ for $J \in B(N)$. We define
\begin{align}
\begin{split}
f_{I,J} &= \pi_J(\delta) \circ f \circ \iota_I: \mathbb{I}^I \rightarrow \mathbb{I}^J(\delta) \\
g_{J,I} &= \pi_I(\delta) \circ g \circ \iota_J: \mathbb{I}^J \rightarrow \mathbb{I}^I(\delta).
\end{split}
\end{align}

We prove the theorem by a mix of combinatorial and geometric arguments. First we show that it is enough to prove the theorem under the assumption that all the rectangles in $B(M)$ and $B(N)$ are of the same type. Then we define a real-valued function $\alpha$ on the set of rectangles which in a sense measures, in the case $n=2$, how far `up and to the right' a rectangle is. There is a preorder $\leq_\alpha$ associated to $\alpha$. The idea behind $\leq_\alpha$ is that if there is a nonzero morphism $\chi: \mathbb{I}^I \rightarrow \mathbb{I}^J(\epsilon)$ and $I \leq_\alpha J$, then $I$ and $J$ have to be close to each other. Finding pairs of intervals in $B(M)$ and $B(N)$ that are close is exactly what we need to construct a $(2n-1)\delta$-matching. Lemmas \ref{close} and \ref{nonzero} say that such morphisms behave nicely in a precise sense that we will exploit when we prove Lemma \ref{lemmaHall}. If we remove the conditions mentioning $\leq_\alpha$, Lemmas \ref{close} and \ref{nonzero} are not even close to being true, so one of the main points in the proof of Lemma \ref{lemmaHall} is that we must exclude the cases that are not covered by Lemmas \ref{close} and \ref{nonzero}. We do this by proving that a certain matrix is upper triangular, where the `bad cases' correspond to the elements above the diagonal and the `good cases' correspond to elements on and below the diagonal.

Lemma \ref{lemmaHall} is what ties together the geometric and combinatorial parts of the proof of Theorem \ref{main}. While we prove Lemma \ref{lemmaHall} by geometric arguments, by Hall's marriage theorem the lemma is equivalent to a statement about matchings between $B(M)$ and $B(N)$. We have to do some combinatorics to get exactly the statement we need, namely that there is a $(2n-1)\delta$-matching between $B(M)$ and $B(N)$, and we do this after stating Lemma \ref{lemmaHall}.

We begin by describing morphisms between rectangle modules.

\begin{lemma}
\label{k-endo}
Let $\chi: \mathbb{I}^I \rightarrow \mathbb{I}^J$ be a morphism between interval modules. Suppose $A = I \cap J$ is an interval. Then, for all $a,b \in A$,~$\chi_a = \chi_b$ as $k$-endomorphisms.
\end{lemma}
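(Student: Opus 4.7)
The plan is to reduce to comparable pairs by naturality, then use the zigzag connectivity of $A$ to handle incomparable pairs.

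First I would observe that for any $p \in A = I \cap J$, both $\mathbb{I}^I_p$ and $\mathbb{I}^J_p$ equal $k$, so $\chi_p$ is an element of $\mathrm{End}_k(k) \cong k$. I want to show all these scalars coincide across $A$.

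Next, for $a \le b$ with $a, b \in A$, naturality of $\chi$ gives the commutative square
\begin{equation*}
\begin{tikzpicture}[baseline=(current bounding box.center)]
\matrix(m)[matrix of math nodes, row sep=2em, column sep=3em]
{k & k \\ k & k \\};
\path[->,font=\scriptsize]
(m-1-1) edge node[above] {$\chi_a$} (m-1-2)
(m-1-1) edge node[left] {$\phi_{\mathbb{I}^I}(a,b)$} (m-2-1)
(m-1-2) edge node[right] {$\phi_{\mathbb{I}^J}(a,b)$} (m-2-2)
(m-2-1) edge node[below] {$\chi_b$} (m-2-2);
\end{tikzpicture}
\end{equation*}
Since $a,b$ both lie in $I$ and in $J$, the two vertical arrows are $\mathrm{id}_k$ by the definition of an interval module, and the square collapses to $\chi_a = \chi_b$.

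For arbitrary $a, b \in A$ that are not necessarily comparable, I invoke the connectivity axiom for the interval $A$: there exist $p_1, p_2, \ldots, p_{2m} \in A$ with
\begin{equation*}
a \le p_1 \ge p_2 \le p_3 \ge \cdots \ge p_{2m} \le b.
\end{equation*}
Along each step of this zigzag, both endpoints lie in $A$, so the previous paragraph applies (in one direction or the other) and the value of $\chi$ is preserved. Chaining these equalities yields $\chi_a = \chi_b$, which is the desired conclusion.

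There is no real obstacle here; the only subtlety is noticing that the naive argument using a single internal morphism only works for comparable points, so one must invoke the zigzag-connectivity in the definition of an interval to promote the statement to all pairs in $A$. The hypothesis that $A$ is itself an interval (rather than just a nonempty intersection) is exactly what supplies this zigzag while keeping it inside $A$, where $\chi$ is well-behaved.
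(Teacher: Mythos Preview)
Your proof is correct and follows essentially the same approach as the paper: use naturality to handle comparable pairs (since the internal morphisms are identities on $A$), then invoke the connectivity axiom for the interval $A$ to extend to arbitrary pairs. You have simply spelled out the zigzag step that the paper compresses into a single sentence.
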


\begin{proof}
Suppose $a \leq b$ and $a,b \in A$. Then $\chi_b \circ \phi_{\mathbb{I}^I}(a,b) = \phi_{\mathbb{I}^J}(a,b) \circ \chi_a$. Since the $\phi$-morphisms are identities, we get $\chi_a = \chi_b$ as $k$-endomorphisms. By the connectivity axiom for intervals, the equality extends to all elements in $A$.
\end{proof}

Since the intersection of two rectangles is either empty or a rectangle, we can describe a morphism between two rectangle modules uniquely as a $k$-endomorphism if their underlying rectangles intersect. A $k$-endomorphism, in turn, is simply multiplication by a constant. Note that we could have relaxed the assumptions in the proof above and assumed that $a$ is in $I$ instead of in $A$, and still have gotten $\chi_a = \chi_b$. In particular, this means that if $0 \neq \chi: \mathbb{I}^I \rightarrow \mathbb{I}^J$, and $I$ and $J$ are rectangles, then $\textrm{min}_{J_i} \leq \textrm{min}_{I_i}$ for all $i$, which gives $\textrm{min}_J \leq \textrm{min}_I$. Similarly,~$\textrm{max}_J \leq \textrm{max}_I$, and one can also see that $\textrm{min}_I < \textrm{max}_J$ must hold, or else $I \cap J = \varnothing$. We summarize these observations as a corollary of Lemma \ref{k-endo}:

\begin{corollary}
\label{minmaxineq}
Let $R$ and $S$ be rectangles, and let $\chi: \mathbb{I}^R \rightarrow \mathbb{I}^S$ be a nonzero morphism. Then $\textrm{min}_S \leq \textrm{min}_R$ and $\textrm{max}_S \leq \textrm{max}_R$.
\end{corollary}

This will come in handy when we prove Lemmas \ref{typesplit}, \ref{close}, and \ref{nonzero}.

We define a function $w: (B(M) \times B(N)) \sqcup (B(N) \times B(M)) \rightarrow k$ by letting $w(I,J) = x$ if $f_{I,J}$ is given by multiplication by $x$, and $w(I,J) = 0$ if $f_{I,J}$ is the zero morphism.~$w(J,I)$ is given by $g_{J,I}$ in the same way.

With the definition of $w$, it is starting to become clear how combinatorics comes into the picture. We can now construct a bipartite weighted directed graph on $B(M) \sqcup B(N)$ by letting $w(I,J)$ be the weight of the edge from $I$ to $J$. The reader is invited to keep this picture in mind, as a lot of what we do in the rest of the proof can be interpreted as statements about the structure of this graph.

The following lemma allows us to break up the problem and focus on the components of $M$ and $N$ with the same types separately.

\begin{lemma}
\label{typesplit}
Let $R$ and $T$ be rectangles of the same type, and $S$ be a rectangle of a different type. Then $\psi\chi = 0$ for any pair $\chi: \mathbb{I}^R \rightarrow \mathbb{I}^S$,~$\psi: \mathbb{I}^S \rightarrow \mathbb{I}^T$ of morphisms.
\end{lemma}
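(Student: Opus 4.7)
The plan is to argue by contradiction: assume $\psi\chi \ne 0$, which forces both $\chi$ and $\psi$ to be nonzero, and then show that this would force $S$ to be of the same type as $R$ and $T$.

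First I would apply Corollary \ref{minmaxineq} twice. From $\chi \ne 0$ we get $\textrm{min}_S \le \textrm{min}_R$ and $\textrm{max}_S \le \textrm{max}_R$, and from $\psi \ne 0$ we get $\textrm{min}_T \le \textrm{min}_S$ and $\textrm{max}_T \le \textrm{max}_S$. Chaining these, in each coordinate $i$ we have the sandwich
\[
(\textrm{min}_T)_i \;\le\; (\textrm{min}_S)_i \;\le\; (\textrm{min}_R)_i, \qquad (\textrm{max}_T)_i \;\le\; (\textrm{max}_S)_i \;\le\; (\textrm{max}_R)_i.
\]

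Next I would use these inequalities, coordinate by coordinate, to show that $S_i$ has the same type as $R_i$ and $T_i$. For the lower endpoint, split into two cases: if $R_i$ and $T_i$ are both unbounded below, then $(\textrm{min}_R)_i$ and $(\textrm{min}_T)_i$ are both $-\infty^+$, so the sandwich forces $(\textrm{min}_S)_i = -\infty^+$ and $S_i$ is unbounded below; if $R_i$ and $T_i$ are both bounded below, then $(\textrm{min}_R)_i$ and $(\textrm{min}_T)_i$ are finite decorated numbers, so $(\textrm{min}_S)_i$ lies between them and is finite, so $S_i$ is bounded below. The analogous dichotomy for the upper endpoint, using the $\textrm{max}$ inequalities, shows $S_i$ matches $R_i$ and $T_i$ in boundedness above as well.

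Putting the coordinates together, $S$ would be of the same type as $R$ and $T$, contradicting the hypothesis. Hence at least one of $\chi, \psi$ is zero, and $\psi\chi = 0$. The only subtle point is making sure the type comparison really only depends on which endpoints are $\pm\infty$ and not on the decorations at finite endpoints, which is exactly the content of the definition of same type (only the unbounded/bounded distinction in each coordinate matters); no calculation beyond reading off the inequalities is needed.
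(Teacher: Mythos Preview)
Your proof is correct and follows essentially the same approach as the paper: assume both morphisms are nonzero, apply Corollary~\ref{minmaxineq} twice to sandwich $\textrm{min}_S$ and $\textrm{max}_S$ between those of $R$ and $T$, and conclude that $S$ must share the type of $R$ and $T$. The paper compresses your coordinate-by-coordinate boundedness analysis into a single ``it follows that'', but the argument is the same.
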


\begin{proof}
Suppose $\psi, \chi \neq 0$. By Corollary \ref{minmaxineq}, $\textrm{min}_R \geq \textrm{min}_S \geq \textrm{min}_T$ and $\textrm{max}_R \geq \textrm{max}_S \geq \textrm{max}_T$. We get $\textrm{min}_{R_i} \geq \textrm{min}_{S_i} \geq \textrm{min}_{T_i}$ and $\textrm{max}_{R_i} \geq \textrm{max}_{S_i} \geq \textrm{max}_{T_i}$ for all $i$, and it follows that if $R$ and $T$ are of the same type, then $S$ is of the same type as $R$ and $T$.
\end{proof}

Let $f': M \rightarrow N(\delta)$ be defined by $f'_{I,J} = f_{I,J}$ for $I \in B(M)$ and $J \in B(N)$ if $I$ and $J$ are of the same type, and $f'_{I,J} = 0$ if they are not, and let $g': N \rightarrow M(\delta)$ be defined analogously. Here $f'$ and $g'$ are assembled from $f'_{I,J}$ and $g'_{J,I}$ the same way $f$ and $g$ are from $f_{I,J}$ and $g_{J,I}$. Suppose $I, I' \in B(M)$. Then we have
\begin{align}
\begin{split}
\sum_{J \in B(N)} g_{J,I'}(\delta) f_{I,J} &= \sum_{J \in B(N)} {g'_{J,I'}}(\delta) f'_{I,J}.
\end{split}
\end{align}
When $I$ and $I'$ are of different types, the left side is zero because $f$ and $g$ are $\delta$-interleaving morphisms, and all the summands on the right side are zero by definition of $f'$ and $g'$. When $I$ and $I'$ are of the same type, the equality follows from Lemma \ref{typesplit}. This means that $g'(\delta) f' = g(\delta) f$. We also have $f'(\delta) g' = f(\delta) g$, so $f'$ and $g'$ are $\delta$-interleaving morphisms. In particular,~$f'$ and $g'$ are $\delta$-interleaving morphisms when restricted to the components of $M$ and $N$ of a fixed type. If we can show that $f'$ and $g'$ induce a $(2n-1)\delta$-matching on each of the mentioned components, we will have proved Theorem \ref{main}. In other words, we have reduced the problem to the case where all the intervals in $B(M)$ and $B(N)$ are of the same type.

For a decorated number $a^*$, let $u(a^*) = a$ if $a \neq \pm \infty$ and $u(a^*) = 0$ otherwise. Let $a = (a_1, a_2, \dots, a_n)$ be a decorated point. We define $P(a)$ to be the number of the decorated numbers $a_i$ decorated with $+$, and we also define $\alpha(a) = \sum_{1 \leq i \leq n} u(a_i)$. What we really want to look at are rectangles and not decorated points by themselves, so we define $P(R) = P(\textrm{min}_R) + P(\textrm{max}_R)$ and $\alpha(R) = \alpha(\textrm{min}_R) + \alpha(\textrm{max}_R)$ for any rectangle $R$.
Define an order $\leq_\alpha$ on decorated points given by $a \leq_\alpha b$ if either
\begin{itemize}
\item $\alpha(a) < \alpha(b)$, or
\item $\alpha(a) = \alpha(b)$ and $P(a) \leq P(b)$
\end{itemize}
This defines a preorder. In other words, it is transitive ($R \leq_\alpha S \leq_\alpha T$ implies $R \leq_\alpha T$) and reflexive ($R \leq_\alpha R$ for all $R$). We write $R <_\alpha S$ if $R \leq_\alpha S$ and not $R \geq_\alpha S$.

The order $\leq_\alpha$ is one of the most important ingredients in the proof. The point is that if there is a nonzero morphism from $\mathbb{I}^R$ to $\mathbb{I}^S(\epsilon)$ and $R \leq_\alpha S$, then $R$ and $S$ have to be close to each other. If $\epsilon = 0$,~$R$ and $S$ actually have to be equal. This `closeness property' is expressed in Lemma \ref{close}, and is also exploited in Lemma \ref{nonzero}. Finally, in the proof of Lemma \ref{lemmaHall}, we make sure that we only have to deal with morphisms $g_{J,I'}(\delta) \circ f_{I,J}$ for $I \leq_\alpha I'$ and not $I >_\alpha I'$, so that our lemmas can be applied.

In Figure \ref{fig:shift} we see two rectangles $R = (0,4) \times (0,4)$ and $S = (2,5) \times (2,5)$. There is no nonzero morphism from $\mathbb{I}^R$ to $\mathbb{I}^S$ or $\mathbb{I}^{S(1)}$, because $\textrm{min}_R < \textrm{min}_{S(\epsilon)}$ for all $\epsilon < 2$. This is connected to the fact that $\alpha(R) = 8 < 14 = \alpha(S)$, which can be interpreted to mean that $R$ is `further down and to the left' than $S$. The point of including $P(\alpha)$ in the definition of $\alpha$ is that e.g.~$(a,b]$ is a tiny bit `further to the right' than $[a,b)$, and this is a subtlety that $P$ recognizes, and that matters in the proofs of Lemmas \ref{close} and \ref{nonzero}.

\begin{figure}
\centering
\includegraphics[scale=0.5]{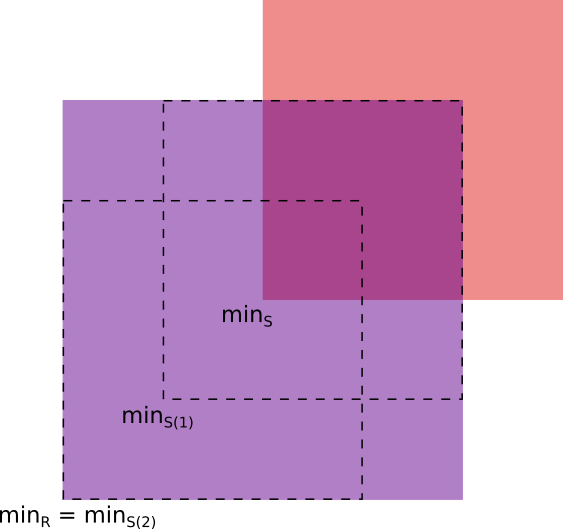}
\caption{Rectangles $R = (0,4) \times (0,4)$ (purple),~$S = (2,5) \times (2,5)$ (pink),~$S(1) = (1,4) \times (1,4)$ (dotted border), and $S(2) = (0,3) \times (0,3)$ (dotted border).\label{fig:shift}}
\end{figure}

\begin{lemma}
\label{close}
Let $R$,~$S$, and $T$ be rectangles of the same type with $R \leq_\alpha T$. Suppose there are nonzero morphisms $\chi: \mathbb{I}^R \rightarrow \mathbb{I}^S(\epsilon)$ and $\psi: \mathbb{I}^S \rightarrow \mathbb{I}^T(\epsilon)$. Then $\mathbb{I}^S$ is $(2n-1)\epsilon$-interleaved with either $\mathbb{I}^R$ or $\mathbb{I}^T$.
\end{lemma}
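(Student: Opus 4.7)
The strategy is to turn the existence of the nonzero morphisms $\chi$ and $\psi$ into coordinate-wise decorated inequalities on the endpoints of $R$, $S$, and $T$, and to play these off against the global comparison $R \leq_\alpha T$ to force $\mathbb{I}^S$ to be close to one of $\mathbb{I}^R$ or $\mathbb{I}^T$. Concretely, I start by applying Corollary \ref{minmaxineq} to $\chi$ and $\psi$ to obtain, for every coordinate $i$, the decorated inequalities
\begin{equation*}
\textrm{min}_{S_i} \leq \textrm{min}_{R_i} + \epsilon,\ \ \textrm{max}_{S_i} \leq \textrm{max}_{R_i} + \epsilon,\ \ \textrm{min}_{T_i} \leq \textrm{min}_{S_i} + \epsilon,\ \ \textrm{max}_{T_i} \leq \textrm{max}_{S_i} + \epsilon.
\end{equation*}
Since $R$, $S$, and $T$ share a type, their infinite endpoints occur in the same coordinates, so those impose no constraint and I can restrict attention to the coordinates in which all three are finite.

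I then introduce the coordinate-wise decorated deviations $\delta_i = \textrm{min}_{R_i} - \textrm{min}_{S_i}$, $\delta'_i = \textrm{max}_{R_i} - \textrm{max}_{S_i}$, $\eta_i = \textrm{min}_{T_i} - \textrm{min}_{S_i}$, $\eta'_i = \textrm{max}_{T_i} - \textrm{max}_{S_i}$, regarded as extended reals with an infinitesimal $\epsilon_0 > 0$ attached so that $a^+ - a^- = \epsilon_0$. The inequalities above read $\delta_i, \delta'_i \geq -\epsilon$ and $\eta_i, \eta'_i \leq \epsilon$, while writing $\tilde\alpha(R) := \alpha(R) + P(R)\epsilon_0$ makes the condition $R \leq_\alpha T$ equivalent to $\tilde\alpha(R) \leq \tilde\alpha(T)$, which rearranges into the single sum inequality
\begin{equation*}
D := \sum_i (\delta_i + \delta'_i) \;\leq\; \sum_i (\eta_i + \eta'_i) =: E.
\end{equation*}
Here the $P$-part of $\leq_\alpha$ is exactly what is needed for the infinitesimal contributions to come out right when the real parts of $\alpha(R)$ and $\alpha(T)$ agree.

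The proof finishes by contradiction. For rectangle modules of the same type, the ``identity on the intersection'' morphism gives an $\epsilon'$-interleaving whenever the decorated $\ell_\infty$-distance between $\textrm{min}$'s and between $\textrm{max}$'s is at most $\epsilon'$, so if $\mathbb{I}^S$ is $(2n-1)\epsilon$-interleaved with neither $\mathbb{I}^R$ nor $\mathbb{I}^T$, then some $\delta_i$ or $\delta'_i$ exceeds $(2n-1)\epsilon$ (an ``$R$-excess'') and some $\eta_j$ or $\eta'_j$ falls below $-(2n-1)\epsilon$ (a ``$T$-excess''). But $D$ is a sum of at most $2n$ terms, one strictly exceeding $(2n-1)\epsilon$ and the rest $\geq -\epsilon$, so $D > (2n-1)\epsilon + (2n-1)(-\epsilon) = 0$; by the symmetric computation $E < 0$, contradicting $D \leq E$. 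The main obstacle I expect is the bookkeeping around decorated endpoints---verifying that the extended-real encoding correctly translates Corollary \ref{minmaxineq} and the $P$-refinement of $\leq_\alpha$, and that for same-type rectangles decorated $\ell_\infty$-closeness really does produce an interleaving. The combinatorial identity $(2n-1) + (2n-1)(-1) = 0$ is what pins down the constant $(2n-1)$ and suggests the bound is tight for this approach.
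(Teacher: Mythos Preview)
Your proposal is correct and follows essentially the same approach as the paper: both arguments extract the coordinate-wise inequalities from Corollary~\ref{minmaxineq}, assume $\mathbb{I}^S$ is not $(2n-1)\epsilon$-interleaved with $\mathbb{I}^R$ (resp.~$\mathbb{I}^T$) to obtain a single coordinate deviating by more than $(2n-1)\epsilon$, sum against the remaining $\leq 2n-1$ coordinates each deviating by at most $\epsilon$, and contradict $R\leq_\alpha T$. Your infinitesimal encoding of decorations and the direct comparison $D\leq E$ are a slightly slicker packaging of what the paper does by proving $S<_\alpha R$ and $T<_\alpha S$ separately and handling the $\alpha$-equality case via $P$ by hand, but the substance is the same.
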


\begin{proof}
Since $\chi \neq 0$, we have
\begin{itemize}
\item $\textrm{min}_S \leq \textrm{min}_R + \epsilon$
\item $\textrm{max}_S \leq \textrm{max}_R + \epsilon$.
\end{itemize}
This follows from Corollary \ref{minmaxineq}.

Suppose $\mathbb{I}^R$ and $\mathbb{I}^S$ are not $(2n-1)\epsilon$-interleaved. Then either $\textrm{min}_S + (2n-1)\epsilon \ngeq \textrm{min}_R$ or $\textrm{max}_S + (2n-1)\epsilon \ngeq \textrm{max}_R$; let us assume the latter. (The former is similar.) In this case, there is an $m$ such that $\textrm{max}_{S_m} < \textrm{max}_{R_m} - (2n-1)\epsilon$. For $i \neq m$, we have $\textrm{max}_{S_i} \leq \textrm{max}_{R_i} + \epsilon$ by the second bullet point. We get
\begin{align}
\begin{split}
\sum_{1 \leq i \leq n} u(\textrm{max}_{S_i}) &\leq \left(\sum_{1 \leq i \leq n} u(\textrm{max}_{R_i})\right) - (2n-1)\epsilon + (n-1)\epsilon \\
&= \left(\sum_{1 \leq i \leq n} u(\textrm{max}_{R_i})\right) - n\epsilon.
\end{split}
\end{align}

The first bullet point gives us
\begin{equation}
\sum_{1 \leq i \leq n} u(\textrm{min}_{S_i}) \leq \left(\sum_{1 \leq i \leq n} u(\textrm{min}_{R_i})\right) + n\epsilon,
\end{equation}
so we get $\alpha(S) \leq \alpha(R)$. If the inequality is strict, we have $S <_\alpha R$. If not, we have
\begin{itemize}
\item $u(\textrm{min}_{S_i}) = u(\textrm{min}_{R_i}) + \epsilon$ for all $i$
\item $u(\textrm{max}_{S_i}) = u(\textrm{max}_{R_i}) + \epsilon$ for $i \neq m$
\item $u(\textrm{max}_{S_m}) = u(\textrm{max}_{R_m}) - (2n-1)\epsilon$.
\end{itemize}
Because of the inequalities $\textrm{min}_S \leq \textrm{min}_R + \epsilon$ and $\textrm{max}_S \leq \textrm{max}_R + \epsilon$ (recall that these are inequalities of decorated points with the poset structure we defined earlier), we have $P(\textrm{min}_{S_i}) \leq P(\textrm{min}_{R_i})$ for all $i$ and $P(\textrm{max}_{S_i}) \leq P(\textrm{max}_{R_i})$ for $i \neq m$. But since $\textrm{max}_{S_m} < \textrm{max}_{R_m} - (2n-1)\epsilon$, we have $P(\textrm{max}_{S_m}) < P(\textrm{max}_{R_m})$, so $S <_\alpha R$. Similarly, we can prove $T <_\alpha S$ if $\mathbb{I}^S$ and $\mathbb{I}^T$ are not $(2n-1)\epsilon$-interleaved, so we have $T <_\alpha R$, which is a contradiction.
\end{proof}

\begin{lemma}
\label{nonzero}
Let $R$,~$S$, and $T$ be rectangles of the same type with $R$ and $T$ $(4n-2)\epsilon$-significant and $\alpha(R) \leq \alpha(T)$. Suppose there are nonzero morphisms $\chi: \mathbb{I}^R \rightarrow \mathbb{I}^S(\epsilon)$ and $\psi: \mathbb{I}^S \rightarrow \mathbb{I}^T(\epsilon)$. Then $\psi(\epsilon) \circ \chi \neq 0$.
\end{lemma}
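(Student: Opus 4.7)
The plan is to show that the composition $\psi(\epsilon) \circ \chi$ is nonzero on a single box, and that this box is forced to be nonempty under our hypotheses. By Lemma~\ref{k-endo}, $\chi$ is multiplication by a single nonzero scalar $c_\chi$ on the intersection $R \cap S(\epsilon)$ and zero elsewhere; similarly $\psi(\epsilon)$ is multiplication by a nonzero scalar on $S(\epsilon) \cap T(2\epsilon)$. Hence $\psi(\epsilon) \circ \chi$ is nonzero precisely on $R \cap S(\epsilon) \cap T(2\epsilon)$. Applying Corollary~\ref{minmaxineq} to $\chi$ and $\psi$ yields $\textrm{min}_{T(2\epsilon)} \leq \textrm{min}_{S(\epsilon)} \leq \textrm{min}_R$ and $\textrm{max}_{T(2\epsilon)} \leq \textrm{max}_{S(\epsilon)} \leq \textrm{max}_R$, so the triple intersection actually equals $R \cap T(2\epsilon)$. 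It therefore suffices to prove $R \cap T(2\epsilon) \neq \varnothing$.

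I would argue by contradiction: suppose $R \cap T(2\epsilon) = \varnothing$. Given the inequalities above, emptiness must come from some coordinate $m$ in which $\textrm{min}_{R_m} + 2\epsilon > \textrm{max}_{T_m}$ in the decorated order. Combining this with the $(4n-2)\epsilon$-significance of $T$ (which yields $u(\textrm{max}_{T_m}) \geq u(\textrm{min}_{T_m}) + (4n-2)\epsilon$) gives $u(\textrm{min}_{T_m}) \leq u(\textrm{min}_{R_m}) - (4n-4)\epsilon$; symmetrically, combining with the $(4n-2)\epsilon$-significance of $R$ gives $u(\textrm{max}_{T_m}) \leq u(\textrm{max}_{R_m}) - (4n-4)\epsilon$. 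So coordinate $m$ contributes a loss of at least $(8n-8)\epsilon$ to $\alpha(T)$ relative to $\alpha(R)$. For any other coordinate $i \neq m$, chaining Corollary~\ref{minmaxineq} through $S$ gives $u(\textrm{min}_{T_i}) \leq u(\textrm{min}_{R_i}) + 2\epsilon$ and $u(\textrm{max}_{T_i}) \leq u(\textrm{max}_{R_i}) + 2\epsilon$, so the $n - 1$ other coordinates contribute a gain of at most $(4n-4)\epsilon$. Summing, $\alpha(T) \leq \alpha(R) - (8n-8)\epsilon + (4n-4)\epsilon = \alpha(R) - (4n-4)\epsilon$, contradicting $\alpha(R) \leq \alpha(T)$ as soon as $n \geq 2$.

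The main obstacle I anticipate is the boundary case $n = 1$, where $-(4n-4)\epsilon = 0$ and the nonstrict bounds only give $\alpha(T) \leq \alpha(R)$. More generally, the issue surfaces whenever the inequality $\textrm{min}_{R_m} + 2\epsilon > \textrm{max}_{T_m}$ is realized only through decorations, so that $u(\textrm{min}_{R_m}) + 2\epsilon = u(\textrm{max}_{T_m})$ but, writing $r = u(\textrm{min}_{R_m})$, at least one of $\textrm{min}_{R_m} = r^+$ or $\textrm{max}_{T_m} = (r + 2\epsilon)^-$ holds. In each such sub-case the forced decoration promotes one of the significance inequalities to a strict one: $u(\textrm{max}_{R_m}) > r + (4n-2)\epsilon$ when $\textrm{min}_{R_m} = r^+$, and $u(\textrm{min}_{T_m}) < r - (4n-4)\epsilon$ when $\textrm{max}_{T_m} = (r + 2\epsilon)^-$. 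Either strict inequality upgrades the coordinate-$m$ bound and yields $\alpha(T) < \alpha(R)$ even when $n = 1$. This decoration bookkeeping is the fussiest part of the argument, analogous to the $P$-term considerations at the end of the proof of Lemma~\ref{close}, but is otherwise routine.
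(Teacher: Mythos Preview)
Your argument is correct and follows essentially the same route as the paper: reduce to showing $R\cap T(2\epsilon)\neq\varnothing$, pick a bad coordinate $m$, use the $(4n-2)\epsilon$-significance of $R$ and $T$ to force $\alpha(T)\le\alpha(R)-(4n-4)\epsilon$, and then treat the equality case separately. The only cosmetic differences are that the emptiness condition should read $\textrm{min}_{R_m}+2\epsilon\ge\textrm{max}_{T_m}$ (not strict) in the decorated order, and that the paper disposes of the boundary case by explicitly identifying $R=T=[r,r+2\epsilon]$ and checking $R\cap T(2\epsilon)=\{r\}\neq\varnothing$, whereas you use decoration bookkeeping to upgrade one inequality to strict---both arguments are equivalent.
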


The constant $(4n-2)$ can be improved on for $n > 1$, but since the constant $(2n-1)$ in Lemma \ref{close} is optimal, strengthening Lemma \ref{nonzero} will not help us get a better constant in Theorem \ref{main}.

\begin{proof}
Suppose that $\chi$ and $\psi$ are nonzero, but $\psi(\epsilon) \circ \chi = 0$. We have

\begin{itemize}
\item $\textrm{min}_R + 2\epsilon \geq \textrm{min}_{T}$
\item $\textrm{min}_{R_m} + 2\epsilon \geq \textrm{max}_{T_m}$ for some $m$
\item $\textrm{max}_R + 2\epsilon \geq \textrm{max}_{T}$
\item $\textrm{max}_{R_m} \geq \textrm{max}_{T_m} + (4n-4)\epsilon$.
\end{itemize}
The first and third statements hold because $\chi, \psi \neq 0$. (See Corollary \ref{minmaxineq}.) The second is equivalent to $\textrm{min}_R \nless \textrm{max}_{T(2\epsilon)}$. If this did not hold, $R$ and $T(2\epsilon)$ would intersect, and  $\psi(\epsilon) \circ \chi$ would be nonzero in this intersection, which is a contradiction. The fourth statement follows from the second and the fact that $R$ is $(4n-2)\epsilon$-significant.

Since $T$ is $(4n-2)\epsilon$-significant,~$\textrm{min}_{T} + (4n-2)\epsilon < \textrm{max}_{T}$. Thus the second bullet point implies that $\textrm{min}_{R_m} + 2\epsilon > \textrm{min}_{T_m} + (4n-2)\epsilon$. The first point gives $\textrm{min}_{R_i} \geq \textrm{min}_{T_i} - 2\epsilon$ for $i \neq m$. In a similar fashion, we get from the last two points that $\textrm{max}_{R_m} \geq \textrm{max}_{T} + (4n-4)\epsilon$ and $\textrm{max}_{R_i} \geq \textrm{max}_{T_i} - 2\epsilon$ for $i \neq m$. From all this, we get
\begin{align}
\begin{split}
\alpha(R) &= \sum_{1 \leq i \leq n} u(\textrm{min}_{R_i}) + u(\textrm{max}_{R_i}) \\
&\geq u(\textrm{min}_{T_m}) + u(\textrm{max}_{T_m}) + 2(4n-4)\epsilon + \sum_{i \neq m} (u(\textrm{min}_{T_i}) + u(\textrm{max}_{T_i}) - 4\epsilon) \\
&= \alpha(T) + (4n-4)\epsilon \\
&\geq \alpha(T).
\end{split}
\end{align}
Equality only holds if $u(\textrm{min}_{T_m}) + (4n-2)\epsilon = u(\textrm{max}_{T_m})$,~$u(\textrm{min}_{R_m}) + (4n-2)\epsilon = u(\textrm{max}_{R_m})$, and $n = 1$. This means that $R = R_1 = T = T_1 = [u(\textrm{min}_R), u(\textrm{min}_R) + 2\epsilon]$. As we see,~$R \cap T(2\epsilon) = [u(\textrm{min}_R), u(\textrm{min}_R)] \neq \varnothing$, so $\psi(\epsilon) \circ \chi \neq 0$.
\end{proof}

We define a function $\mu$ by
\begin{equation}
\mu(I) = \{J \in B(N) \mid I \textrm{ and } J \textrm{ are } (2n-1)\delta \textrm{-interleaved}\}
\end{equation}
for $I$ in $B(M)$. In other words,~$\mu(I)$ contains all the intervals that can be matched with $I$ in a $(2n-1)\delta$-matching. Let $I \in B(M)$ be $(4n-2)\delta$-significant, and pick $p \in \mathbb{R}^n$ such that $p, p + (4n-2)\delta \in I$. Then,~$p + (2n-1)\delta \in J$ for every $J \in \mu(I)$. Since $M$ and $N$ are p.f.d., this means that $\mu(I)$ is a finite set. For $A \subset B(M)$, we write $\mu(A) = \bigcup_{I \in A} \mu(I)$.

\begin{lemma}
\label{lemmaHall}
Let $A$ be a finite subset of $B(M)$ containing no $(4n-2)\delta$-trivial elements. Then $|A| \leq |\mu(A)|$.
\end{lemma}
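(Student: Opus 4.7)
The plan is to turn the interleaving identity $g(\delta)\circ f = \phi_{M,2\delta}$ into an invertible $|A|\times|A|$ scalar matrix that factors through $k^{\mu(A)}$, forcing $|\mu(A)|\geq|A|$. By Lemma~\ref{typesplit} and the discussion following it, we may assume all rectangles in $B(M)$ and $B(N)$ are of a single type.

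Consider the scalar matrices $F=(w(I,J))_{I\in A,\,J\in B(N)}$ and $G=(w(J,I'))_{J\in B(N),\,I'\in A}$. Split $B(N)=\mu(A)\sqcup(B(N)\setminus\mu(A))$, writing $F=[F_1\mid F_2]$ with $F_1$ the column block indexed by $\mu(A)$, and splitting $G$ analogously into row blocks $G_1$ (indexed by $\mu(A)$) and $G_2$. Set $X=F_1G_1$ and $Y=F_2G_2$, and extend $\leq_\alpha$ to a total order $\preceq$ on $A$.

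The key claim is that, with respect to $\preceq$, the matrix $X$ is lower triangular with $1$s on the diagonal, hence invertible. This rests on two facts for every $I\preceq I'$ in $A$. \emph{First}, $Y_{I,I'}=0$: any nonzero summand of $Y_{I,I'}$ would yield nonzero morphisms $\chi:\mathbb{I}^I\to\mathbb{I}^J(\delta)$ and $\psi:\mathbb{I}^J\to\mathbb{I}^{I'}(\delta)$ with $J\notin\mu(A)$, and Lemma~\ref{close} (applied with $R=I$, $S=J$, $T=I'$, $\epsilon=\delta$, using $I\leq_\alpha I'$) would then place $J$ in $\mu(I)\cup\mu(I')\subseteq\mu(A)$, a contradiction. \emph{Second}, $\sum_{J\in B(N)}w(I,J)w(J,I')=\delta_{I,I'}$: the morphism equation $\sum_J g_{J,I'}(\delta)\circ f_{I,J}=(\phi_{M,2\delta})_{I,I'}$ translates to this scalar identity. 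When $I\cap(I'-2\delta)\neq\varnothing$ it is immediate from Lemma~\ref{k-endo} together with the Helly-number-$2$ property of axis-aligned rectangles (so triple intersections of rectangles are nonempty as soon as pairwise intersections are). When $I\cap(I'-2\delta)=\varnothing$, Lemma~\ref{nonzero} (applicable because $I\leq_\alpha I'$ and both $I,I'$ are $(4n-2)\delta$-significant) implies each product $w(I,J)w(J,I')$ must vanish, so the identity reduces to $0=0$. Combined with the first fact, $X_{I,I'}=\delta_{I,I'}$ for $I\preceq I'$.

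Since lower triangular matrices with unit diagonal are invertible, $X$ has rank $|A|$. Because $X=F_1G_1$ factors as $k^A\xrightarrow{G_1}k^{\mu(A)}\xrightarrow{F_1}k^A$, the rank of $G_1$ is at least $|A|$, forcing $|\mu(A)|\geq|A|$. The most delicate step is the second fact: using Lemma~\ref{nonzero} to rule out ``scalar indeterminacy'' in the disjoint case $I\cap(I'-2\delta)=\varnothing$, where the morphism equation alone degenerates to $0=0$ and does not a priori constrain the $k$-valued scalar products.
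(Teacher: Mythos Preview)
Your proof is correct and follows the same approach as the paper: order $A$ compatibly with $\leq_\alpha$, use Lemmas~\ref{close} and~\ref{nonzero} to reduce the interleaving identity $g(\delta)\circ f=\phi_{M,2\delta}$ to a scalar matrix equation over $\mu(A)$ whose right-hand side is triangular with unit diagonal, and conclude by a rank argument. Your case split on whether $I\cap(I'-2\delta)$ is empty (invoking Helly for boxes in the nonempty case) is a minor organizational variation---the paper applies Lemma~\ref{nonzero} uniformly to obtain $w(g_{J,I'}(\delta)f_{I,J})=w(J,I')w(I,J)$ in one stroke---but the underlying argument is identical up to transposition of the matrices.
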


Before we prove Lemma \ref{lemmaHall}, we show that it implies that there is a $(2n-1)\delta$-matching between $B(M)$ and $B(N)$ and thus completes the proof of Theorem \ref{main}.

Let $G_\mu$ be the undirected bipartite graph on $B(M) \sqcup B(N)$ with an edge between $I$ and $J$ if $J \in \mu(I)$. Observe that $G_\mu$ is the same as the graph $G_{(2n-1)\delta}$ we defined when we gave the graph theoretical definition of an $\epsilon$-matching (in this case, $(2n-1)\delta$-matching) in section \ref{defs}. Following that definition, a $(2n-1)\delta$-matching is a matching in $G_\mu$ that covers the set of all $(4n-2)\delta$-significant elements in $B(M)$ and $B(N)$.

For a subset $S$ of a graph $G$, let $A_G(S)$ be the neighbourhood of $S$ in $G$, that is, the set of vertices in $G$ that are adjacent to at least one vertex in $S$. We now apply Hall's marriage theorem \cite{hall35} to bridge the gap between Lemma \ref{lemmaHall} and the statement we want to prove about matchings.
\begin{theorem}[Hall's theorem]
\label{hall}
Let $G$ be a bipartite graph on bipartite sets $X$ and $Y$ such that $A_G(\{x\})$ is finite for all $x \in X$. Then the following are equivalent:
\begin{itemize}
\item for all $X' \subset X$, $|X'| \leq |A_G(X')|$
\item there exists a matching in $G$ covering $X$.
\end{itemize}
\end{theorem}

One of the two implications is easy, since if $|X'| > |A_G(X')|$ for some $X' \subset X$, then there is no matching in $G$ covering $X'$. It is the other implication we will use, namely that the first statement is sufficient for a matching in $G$ covering $X$ to exist.

Letting $X$ be the set of $(4n-2)\delta$-significant intervals in $B(M)$ and $Y$ be $B(N)$, Hall's theorem and Lemma \ref{lemmaHall} give us a matching $\sigma$ in the graph $G_\mu$ covering all the $(4n-2)\delta$-significant elements in $B(M)$.\footnote{Strictly speaking, Lemma \ref{lemmaHall} says nothing about infinite $A$, but the case with $A$ countably infinite follows from the finite cases. Each interval in $A$ contains a rational point, so since $M$ is p.f.d., the cardinality of $A$ is at most finite times countably infinite, which is countable. Thus we have covered all the possible cases.} By symmetry, we also have a matching $\tau$ in $G_\mu$ covering all the $(4n-2)\delta$-significant elements in $B(N)$. Neither of these is necessarily a $(2n-1)\delta$-matching, however, as each of them only guarantees that all the $(4n-2)$-significant intervals in one of the barcodes are matched. We will use $\sigma$ and $\tau$ to construct a $(2n-1)\delta$-matching. This construction is similar to one used to prove the Cantor-Bernstein theorem \cite[pp. 110-111]{theBook}.

Let $H$ be the undirected bipartite graph on $B(M) \sqcup B(N)$ for which the set of edges is the union of the edges in the matchings $\sigma$ and $\tau$. Let $C$ be a connected component of $H$. Suppose the submatching of $\sigma$ in $C$ does not cover all the $(4n-2)\delta$-significant elements of $C$. Then there is a $(4n-2)\delta$-significant $J \in C \cap B(N)$ that is not matched by $\sigma$. If we view $\sigma$ and $\tau$ as partial bijections $\sigma: B(M) \nrightarrow B(N)$ and $\tau: B(N) \nrightarrow B(M)$, we can write the connected component of $J$, which is $C$, as $\{J, \tau(J), \sigma(\tau(J)), \tau(\sigma(\tau(J))), \dots\}$. Either this sequence is infinite, or it is finite, in which case the last element is $(4n-2)\delta$-trivial. In either case, we get that the submatching of $\tau$ in $C$ covers all $(4n-2)\delta$-significant elements in $C$.

By this argument, there is a $(2n-1)\delta$-matching in each connected component of $H$. We can piece these together to get a $(2n-1)\delta$-matching in $B(M) \sqcup B(N)$, so Lemma \ref{lemmaHall} completes the proof of Theorem \ref{main}.

\begin{proof}[Proof of Lemma \ref{lemmaHall}]
Because $\leq_\alpha$ is a preorder, we can order $A = \{I_1, I_2, \dots, I_r\}$ so that $I_i \leq_\alpha I_{i'}$ for all $i \leq i'$. Write $\mu(A) = \{J_1, J_2, \dots, J_s\}$. For $I \in B(M)$, we have
\begin{align}
\begin{split}
\phi_{\mathbb{I}^{I},2\delta} &= \pi_I(2\delta) g(\delta) f|_I \\
&= \pi_I(2\delta) \left(\sum_{J \in B(N)} g|_J \pi_J\right)(\delta) f|_I \\
&= \sum_{J \in B(N)} \pi_I(2\delta) g|_J(\delta) \pi_J(\delta) f|_I \\
&= \sum_{J \in B(N)} g_{J,I}(\delta) f_{I,J}.
\end{split}
\end{align}
Also,~$\sum_{J \in B(N)} g_{J,I'}(\delta) f_{I,J} = 0$ for $I \neq I' \in B(M)$, since $\phi_{M,2\delta}$ is zero between different components of $M$. Lemma \ref{close} says that if $g_{J,I'}(\delta) f_{I,J} \neq 0$ and $I \leq_\alpha I'$, then $J$ is $(2n-1)\delta$-interleaved with either $I$ or $J'$. This means that if $i < i'$, then
\begin{align}
\begin{split}
0 &= \sum_{J \in B(N)} g_{J,I_{i'}}(\delta) f_{I_i,J}\\
&= \sum_{J \in \mu(A)} g_{J,I_{i'}}(\delta) f_{I_i,J},
\end{split}
\end{align}
as $g_{J,I_{i'}}(\delta) f_{I_i,J} = 0$ for all $J$ that are not $(2n-1)\delta$-interleaved with either $I_i$ or $I_{i'}$. Similarly,
\begin{align}
\begin{split}
\phi_{\mathbb{I}^{I_i},2\delta} &= \sum_{J \in B(N)} g_{J,I_i}(\delta) f_{I_i,J}\\
&= \sum_{J \in \mu(A)} g_{J,I_i}(\delta) f_{I_i,J}.
\end{split}
\end{align}
Writing this in matrix form, we get
\[
\left[
\begin{smallmatrix}
g_{J_1,I_1}(\delta) &  \dots  & g_{J_s,I_1}(\delta) \\
\vdots & \ddots & \vdots \\
g_{J_1,I_r}(\delta) &  \dots  & g_{J_s,I_r}(\delta)
\end{smallmatrix}
\right]
\left[
\begin{smallmatrix}
f_{I_1,J_1} &  \dots  & f_{I_r,J_1} \\
\vdots & \ddots & \vdots \\
f_{I_1,J_s} &  \dots  & f_{I_r,J_s}
\end{smallmatrix}
\right]
=
\left[
\begin{smallmatrix}
\phi_{M_{\mathbb{I}^{I_1},2\delta}} & ? & \dots  & ? \\
0 & \phi_{M_{\mathbb{I}^{I_2},2\delta}} & \dots  & ? \\
\vdots & \vdots & \ddots & \vdots \\
0 & 0 & \dots  & \phi_{M_{\mathbb{I}^{I_r},2\delta}}
\end{smallmatrix}
\right].
\]
That is, on the right-hand side we have the internal morphisms of the $I_i$ on the diagonal, and $0$ below the diagonal.

Recall that a morphism between rectangle modules can be identified with a $k$-endomorphism, and that in our notation,~$f_{I,J}$ and $g_{J,I}$ are given by multiplication by $w(I,J)$ and $w(J,I)$, respectively. For an arbitrary morphism $\psi$ between rectangle modules, we introduce the notation $w(\psi) = c$ if $\psi$ is given by multiplication by $c$, and $0$ otherwise. A consequence of Lemma \ref{nonzero} is that $w(g_{J,I_{i'}}(\delta) f_{I_i,J}) = w(g_{J,I_{i'}})w(f_{I_i,J}) = w(J,I_i)w(I_{i'},J)$ whenever $I_i \leq_\alpha I_{i'}$, in particular if $i \leq i'$. We get
\begin{align}
\begin{split}
1 &= w\left(\phi_{\mathbb{I}^{I},2\delta}\right)\\
&= w\left(\sum_{J \in \mu(A)} g_{J,I_i}(\delta) f_{I_i,J}\right)\\
&= \sum_{J \in \mu(A)} w(g_{J,I_i}(\delta) f_{I_i,J})\\
&= \sum_{J \in \mu(A)} w(J,I_i) w(I_i,J),\\
\end{split}
\end{align}
and similarly $0 = \sum_{J \in \mu(A)} w(J,I_{i'}) w(I_i,J)$ for $i \leq i'$. Again we can interpret this as a matrix equation:
\[
\begin{bmatrix}
w(J_1,I_1) &  \dots  & w(J_s,I_1) \\
\vdots & \ddots & \vdots \\
w(J_1,I_r) & \dots  & w(J_s,I_r) \\
\end{bmatrix}
\begin{bmatrix}
w(I_1,J_1) & \dots  & w(I_r,J_1) \\
\vdots & \ddots & \vdots \\
w(I_1,J_s) &  \dots  & w(I_r,J_s) \\
\end{bmatrix}
=
\begin{bmatrix}
1 & ? & \dots  & ? \\
0 & 1 & \dots  & ? \\
\vdots & \vdots & \ddots & \vdots \\
0 & 0 & \dots  & 1
\end{bmatrix}.
\]
That is, the right-hand side is an $r \times r$ upper triangular matrix with $1$'s on the diagonal. The right-hand side has rank $|A|$ and the left-hand side has rank at most $|\mu(A)|$, so the lemma follows immediately from this equation.
\end{proof}

\subsection{Block decomposable modules}

Next, we prove stability for block decomposable modules, which, as explained in \cref{zigReeb}, implies stability for zigzag modules and Reeb graphs. Let $\mathbb{R}_+^2 = \{(x,y) \in \mathbb{R}^2 \mid x + y \geq 0\}$.
\begin{definition}
A \textbf{triangle} is a nonempty set $T$ of the form $\{(x,y) \in \mathbb{R}^2 \mid x < a, y < b\} \cap \mathbb{R}_+^2$ for some $(a,b) \in (\mathbb{R} \cup \{\infty\})^2$ with $a + b > 0$.
\end{definition}
It follows that triangles are intervals. For a triangle $T = \{(x,y) \in \mathbb{R}^2 \mid x < a, y < b\} \cap \mathbb{R}_+^2$, we write $\textrm{max}_T = (a,b) \in (\mathbb{R} \cup \{\infty\})^2$. If $T$ is bounded,~$\textrm{max}_T$ is the maximal element in the closure of $T$, as illustrated in Figure \ref{fig:triangle}. A \textit{triangle decomposable module} is an interval decomposable $\R^2$-module whose barcode only contains triangles. Observe that triangles correspond exactly to blocks of the form $(a,b)_{BL}$ under the poset isomorphism between $\R^{\text{op}}\times \R$ and $\R^2$ flipping the $x$-axis.

\begin{figure}
\centering
\includegraphics[scale=0.6]{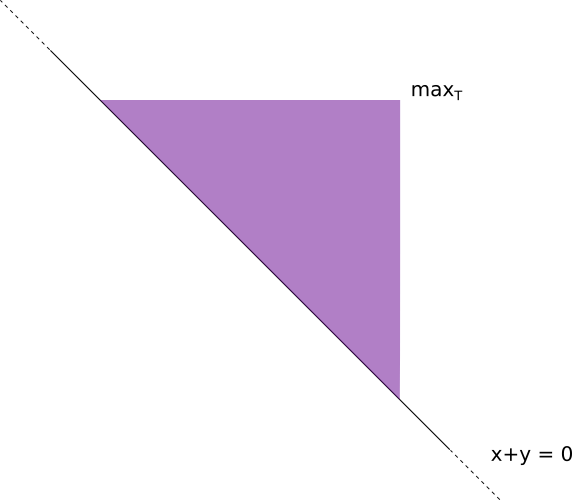}
\caption{A bounded triangle $T$.\label{fig:triangle}}
\end{figure}

\begin{theorem}
\label{triangle}
Let $M$ and $N$ be $\delta$-interleaved triangle decomposable modules. Then there is a $\delta$-matching between $B(M)$ and $B(N)$.
\end{theorem}

To prove this, we can split the triangles into sets of different `types', as we did with the rectangles. We get four different types of triangles $T$, depending on whether $\textrm{max}_T$ is of the form $(a,b)$,~$(\infty,b)$,~$(a,\infty)$, or $(\infty,\infty)$ for $a,b \in \mathbb{R}$. Now a result analogous to Lemma \ref{typesplit} holds, implying that it is enough to show Theorem \ref{triangle} under the assumption that the barcodes only contain intervals of a single type. The case in which the triangles are bounded is the hardest one, and the only one we will prove. So from now on, we assume all triangles to be bounded.

Again, we reuse parts of the proof of Theorem \ref{main}. For $I \in B(M)$, we define $\nu(I) = \{J \in B(N) \mid I \textrm{ and } J \textrm{ are } \delta \textrm{-interleaved}\}$. The discussion about Hall's theorem is still valid, so we only need to prove the analogue of Lemma \ref{lemmaHall} for $\nu$. Define $\alpha(T) = m_1+m_2$, where $\textrm{max}_T = (m_1,m_2)$. The only things we need to complete the proof of the analogue of Lemma \ref{lemmaHall} for triangle decomposable modules are the following analogues of Lemmas \ref{close} and \ref{nonzero}:

\begin{lemma}
\label{close triangle}
Let $R$,~$S$, and $T$ be triangles with $\alpha(R) \leq \alpha(T)$. Suppose there are morphisms $f: \mathbb{I}^R \rightarrow \mathbb{I}^S(\epsilon)$ and $g: \mathbb{I}^S \rightarrow \mathbb{I}^T(\epsilon)$ such that $g(\epsilon) \circ f \neq 0$. Then $\mathbb{I}^S$ is $\epsilon$-interleaved with either $\mathbb{I}^R$ or $\mathbb{I}^T$.
\end{lemma}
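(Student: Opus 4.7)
The plan is to mimic the strategy from the rectangle and free analogues: use the coordinate information extracted from the chain of nonzero morphisms, together with the assumption that $\mathbb{I}^S$ fails to be $\epsilon$-interleaved with either endpoint, to violate $\alpha(R)\le\alpha(T)$. My first step would be to set up the morphism theory for bounded triangle modules. By the same naturality analysis as in Lemma~\ref{k-endo}, any morphism $\chi:\mathbb{I}^{T_1}\to\mathbb{I}^{T_2}$ between triangle modules acts as multiplication by a single scalar $c$ on $T_1\cap T_2$; moreover, a check of naturality across configurations $p\le q$ with $p\in T_1\cap T_2$ and $q\in T_2\setminus T_1$ (or the symmetric version) shows that $c$ is forced to vanish unless $\textrm{max}_{T_2}\le\textrm{max}_{T_1}$. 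Applied to shifts, this yields: $f\ne 0$ forces $\textrm{max}_S\le\textrm{max}_R+(\epsilon,\epsilon)$ together with $\alpha(S)>2\epsilon$; $g\ne 0$ forces $\textrm{max}_T\le\textrm{max}_S+(\epsilon,\epsilon)$ together with $\alpha(T)>2\epsilon$; and the additional hypothesis $g(\epsilon)\circ f\ne 0$ forces $R\cap(T-2\epsilon)\ne\varnothing$, i.e.\ $\alpha(T)>4\epsilon$ (so $T$ is $2\epsilon$-significant).

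The same analysis characterizes $\epsilon$-interleaving of bounded triangle modules: if both $\mathbb{I}^R$ and $\mathbb{I}^S$ are $2\epsilon$-trivial then $f=g=0$ realizes the interleaving; otherwise the shift-map equalities $g(\epsilon)\circ f=\phi_{\mathbb{I}^R,2\epsilon}$ and $f(\epsilon)\circ g=\phi_{\mathbb{I}^S,2\epsilon}$ force both triangles to be $\epsilon$-significant and the condition reduces to $|\textrm{max}_R-\textrm{max}_S|\le(\epsilon,\epsilon)$ coordinatewise. In particular, if $\alpha(R)\le 2\epsilon$ then the bound $\alpha(S)\le\alpha(R)+2\epsilon\le 4\epsilon$ (extracted from $\textrm{max}_S\le\textrm{max}_R+(\epsilon,\epsilon)$) shows that $R$ and $S$ are both $2\epsilon$-trivial and hence automatically $\epsilon$-interleaved. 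I may therefore assume $\alpha(R)>2\epsilon$ and reduce to the clean case where both candidate interleavings become pure coordinate inequalities.

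In this remaining case I would argue by contradiction. If $\mathbb{I}^S$ is not $\epsilon$-interleaved with $\mathbb{I}^R$, then combined with $\textrm{max}_S\le\textrm{max}_R+(\epsilon,\epsilon)$ we must have $a_R>a_S+\epsilon$ or $b_R>b_S+\epsilon$; similarly non-interleaving with $\mathbb{I}^T$ gives $a_S>a_T+\epsilon$ or $b_S>b_T+\epsilon$. A four-way case split then yields $\alpha(R)>\alpha(T)$ in each branch: for example, $a_R>a_S+\epsilon$ and $a_S>a_T+\epsilon$ give $a_R-a_T>2\epsilon$ while $b_R-b_T\ge -2\epsilon$ (from $b_S\le b_R+\epsilon$ and $b_T\le b_S+\epsilon$), producing $\alpha(R)>\alpha(T)$; and the ``mixed'' case $a_R>a_S+\epsilon$ with $b_S>b_T+\epsilon$ chains the inequalities into strict $a_R>a_T$ and strict $b_R>b_T$, again giving $\alpha(R)>\alpha(T)$. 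This contradicts the hypothesis. The main obstacle is the bookkeeping in the degenerate sub-cases where not all of $R,S,T$ are $2\epsilon$-significant; the key is that the inequality $\alpha(S)\le\alpha(R)+2\epsilon$ coming from $f\ne 0$ converts ``$\alpha(R)$ is small'' directly into ``$S$ is $2\epsilon$-trivial'', which collapses every such degeneracy into the trivially interleaved case already handled.
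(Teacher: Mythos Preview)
Your proposal is correct and follows essentially the same strategy as the paper: assume $\mathbb{I}^S$ is $\epsilon$-interleaved with neither $\mathbb{I}^R$ nor $\mathbb{I}^T$, extract coordinate inequalities on the $\textrm{max}$ points, and contradict $\alpha(R)\le\alpha(T)$.

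The paper's version is considerably shorter for two reasons. First, it uses (implicitly) the cleaner fact that for bounded triangles $|\textrm{max}_R-\textrm{max}_S|\le\epsilon$ alone already implies $\epsilon$-interleaving, so no separate treatment of the $2\epsilon$-trivial cases is needed; your extra hypothesis ``both $\epsilon$-significant'' and the reduction step via $\alpha(R)\le 2\epsilon$ are unnecessary. Second, rather than your four-way split on which coordinate fails at each stage, the paper observes directly that non-interleaving of $R$ and $S$ together with $\textrm{max}_R\ge\textrm{max}_S-\epsilon$ yields $\alpha(R)>\alpha(S)$, and symmetrically $\alpha(S)>\alpha(T)$; chaining these gives the contradiction in one line. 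Your derivation of $\alpha(T)>4\epsilon$ from $g(\epsilon)\circ f\ne 0$ is correct but never used---only $f\ne 0$ and $g\ne 0$ are needed.
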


\begin{lemma}
\label{nonzero triangle}
Let $R$,~$S$, and $T$ be triangles with $T$ $2\epsilon$-significant and $\alpha(R) \leq \alpha(T)$. Suppose there are nonzero morphisms $f: \mathbb{I}^R \rightarrow \mathbb{I}^S(\epsilon)$ and $g: \mathbb{I}^S \rightarrow \mathbb{I}^T(\epsilon)$. Then $g(\epsilon) \circ f \neq 0$.
\end{lemma}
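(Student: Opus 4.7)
The plan is to produce an explicit point $p \in R \cap S(\epsilon) \cap T(2\epsilon)$ on which $g(\epsilon) \circ f$ acts by multiplication by a nonzero scalar, so that $g(\epsilon) \circ f \neq 0$. The geometry is considerably simpler than in the rectangle case because all (bounded) triangles share the common diagonal lower boundary $\{x+y \geq 0\}$, which forces all the relevant lower-bound constraints to be dominated by the one coming from the triangle on the source side.

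First I would establish the triangle analog of Corollary \ref{minmaxineq}: if $\chi: \mathbb{I}^A \to \mathbb{I}^B$ is a nonzero morphism where $A$ and $B$ are triangles or shifts thereof, then $\max_B \leq \max_A$ coordinatewise. By Lemma \ref{k-endo}, $\chi$ is multiplication by a single nonzero scalar on $A \cap B$; naturality at $p \leq q$ with $p \in A \cap B$ and $q \in B$ forces $q \in A$, and tracing this through the triangle geometry reduces the condition to $\max_B \leq \max_A$. Applying this to $f$ and $g$ gives $\max_{S(\epsilon)} \leq \max_R$ and $\max_{T(\epsilon)} \leq \max_S$, which combine to $\max_{T(2\epsilon)} \leq \max_{S(\epsilon)} \leq \max_R$.

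Next, I would invoke the $2\epsilon$-significance of $T$ to obtain $p = (p_1, p_2) \in T$ with $p + 2\epsilon \in T$. Writing $\max_T = (t_1, t_2)$, this unpacks as $p_1 + p_2 \geq 0$ and $p_i + 2\epsilon < t_i$, so $p_i < t_i - 2\epsilon$ and hence $p \in T(2\epsilon)$. The coordinatewise max bounds above then give $p_i < t_i - 2\epsilon \leq r_i$ and $p_i < t_i - 2\epsilon \leq s_i - \epsilon$; combined with $p_1 + p_2 \geq 0$—which is stronger than the shifted lower-boundary constraint $\{x + y \geq -2\epsilon\}$ of $S(\epsilon)$—this yields $p \in R \cap S(\epsilon) \cap T(2\epsilon)$. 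By Lemma \ref{k-endo}, the nonzero morphisms $f$ and $g$ are multiplication by nonzero scalars $c_f$ and $c_g$ on their respective intersections, so $(g(\epsilon) \circ f)_p = c_g c_f \neq 0$, as required.

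The main obstacle is the preliminary triangle-and-shift analog of Corollary \ref{minmaxineq}: one must check that the varying lower-boundary constraints $\{x+y \geq 0\}$, $\{x+y \geq -2\epsilon\}$, $\{x+y \geq -4\epsilon\}$ on $R$, $S(\epsilon)$, $T(2\epsilon)$ do not disrupt the upset/downset arguments used to derive $\max_B \leq \max_A$. In each case the stronger diagonal constraint inherited from the unshifted side is automatic on the upset of $A \cap B$, so everything goes through cleanly; the hypothesis $\alpha(R) \leq \alpha(T)$ enters mainly through its parallelism with the rectangle statement, while the $2\epsilon$-significance of $T$ alone is what produces the witness point.
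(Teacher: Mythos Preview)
Your proof is correct and essentially the same as the paper's: both derive $\max_{T} - 2\epsilon \leq \max_{S} - \epsilon \leq \max_R$ from the nonzero morphisms and then use the $2\epsilon$-significance of $T$ to produce an explicit point in $R \cap S(\epsilon) \cap T(2\epsilon)$ (the paper takes $\max_T - 2\epsilon - \epsilon'$ for small $\epsilon'>0$, you take an arbitrary significance witness $p$). Your observation that the hypothesis $\alpha(R) \leq \alpha(T)$ is never actually invoked is correct---the paper's proof does not use it either.
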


\begin{proof}[Proof of Lemma \ref{close triangle}]
Suppose $\mathbb{I}^R$ and $\mathbb{I}^S$ are not $\epsilon$-interleaved. Then $\textrm{max}_S \ngeq \textrm{max}_R - \epsilon$. But at the same time,~$\textrm{max}_R \geq \textrm{max}_S - \epsilon$, which gives $\alpha(R) > \alpha(S)$. Assuming that $\mathbb{I}^S$ and $\mathbb{I}^T$ are not $\epsilon$-interleaved, either, we also get $\alpha(S) > \alpha(T)$. Thus $\alpha(R) > \alpha(T)$, a contradiction.
\end{proof}

\begin{proof}[Proof of Lemma \ref{nonzero triangle}]
For all triangles $I$, we treat $\textrm{min}_I$ and $\textrm{max}_I$ as undecorated points. We have $\textrm{max}_T - \epsilon \leq \textrm{max}_S$ and $\textrm{max}_S - \epsilon \leq \textrm{max}_R$, so $\textrm{max}_T - 2\epsilon \leq \textrm{max}_R$. Because $T$ is $2\epsilon$-significant,~$\textrm{max}_T - 2\epsilon - \epsilon' \in \mathbb{R}_+^2$ for some $\epsilon' > 0$. Combining these facts, we get $\textrm{max}_T - 2\epsilon - \epsilon' \in R$, so $(g(\epsilon) \circ f)_{\textrm{max}_T - 2\epsilon - \epsilon'} \neq 0$.
\end{proof}

\cref{triangle} implies $d_B(M,N) = d_I(M,N)$ for block decomposable $M$ and $N$ such that $B(M)$ and $B(N)$ only have blocks of the form $(a,b)_{BL}$ (so no closed or half-closed blocks). Our proof technique extends easily to prove the same equality for all block decomposable $M$ and $N$. In fact, $d_B(M,N) \leq d_I(M,N)$ in the case where all the intervals in the barcodes are of the form $[a,b]_{BL}$ follows from \cref{free} below with $n=2$ by the correspondence $[a,b]_{BL} \leftrightarrow \langle(-a,b)\rangle$, while the two cases with half-open blocks are both essentially the algebraic stability theorem. In the end we could stitch the cases together by something similar to \cref{typesplit} and the discussion following it. We omit the details, and anyway the closed and half-open cases are taken care of in \cite{zigzag}. Thus, either by appealing to previous work for the other cases or using our own methods, we get
\begin{theorem}
\label{thmBlock}
Let $M$ and $N$ be block decomposable modules. If $M$ and $N$ are $\delta$-interleaved, there exists a $\delta$-matching between $B(M)$ and $B(N)$.
\end{theorem}

\subsection{Free modules}

\begin{definition}
We define a \textbf{free interval} as an interval of the form $\langle p \rangle := \{q \mid q \geq p\} \subset \mathbb{R}^n$.
\end{definition}
For a free interval $R$, we define $\textrm{min}_R$ by $R = \langle \textrm{min}_R \rangle$.\footnote{This makes $\textrm{min}_R$ an undecorated point, while we have previously defined $\textrm{min}_-$ as decorated points, but this does not matter, as we will not need decorated points in this subsection.} We define a \textit{free $\mathbb{R}^n$-module} as an interval decomposable module whose barcode only contains free intervals. It is easy to see that free intervals are rectangles, so it follows from Theorem \ref{main} that $d_B(M,N) \leq (2n-1)d_I(M,N)$ for free modules $M$,~$N$. But because of the geometry of free modules, this result can be strengthened.

\begin{theorem}
\label{free}
Let $M$ and $N$ be free $\delta$-interleaved $\mathbb{R}^n$-modules with $n \geq 2$. Then there is a $(n-1)\delta$-matching between $B(M)$ and $B(N)$.
\end{theorem}

We already did most of the work while proving Theorem \ref{main}, and there are some obvious simplifications. Firstly, free intervals are $\epsilon$-significant for all $\epsilon \geq 0$. Secondly, for all nonzero $f: \mathbb{I}^R \rightarrow \mathbb{I}^S$ and $g: \mathbb{I}^S \rightarrow \mathbb{I}^T$ with $R$,~$S$,~$T$ free,~$gf$ is nonzero. For $I \in B(M)$, define $\nu(I) = \{J \in B(N) \mid I \textrm{ and } J \textrm{ are } (n-1)\delta \textrm{-interleaved}\}$. By the arguments in the proof of Theorem \ref{main}, we only need to prove Lemma \ref{lemmaHall} with $\mu$ replaced by $\nu$. Lemmas \ref{close} and \ref{nonzero} still hold for free modules, but we need to sharpen Lemma \ref{close}.

\begin{lemma}
Let $R$,~$S$, and $T$ be free intervals with $R \leq_\alpha T$. Suppose there are morphisms $0 \neq f: \mathbb{I}^R \rightarrow \mathbb{I}^S(\epsilon)$ and $0 \neq g: \mathbb{I}^S \rightarrow \mathbb{I}^T(\epsilon)$. Then $\mathbb{I}^S$ is $(n-1)\epsilon$-interleaved with either $\mathbb{I}^R$ or $\mathbb{I}^T$.
\end{lemma}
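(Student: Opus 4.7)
The plan is to mimic the proof of Lemma \ref{close} while exploiting the fact that free intervals are unbounded above, which makes the $\max$-half of all the arguments trivial. For a free interval $R = \langle p \rangle$, every coordinate of $\max_R$ is $\infty$, so $u(\max_{R_i}) = 0$ for all $i$ and $\alpha(R) = \sum_i u(\min_{R_i}) = \sum_i p_i$. Similarly, $P(R) = 0$ uniformly on free intervals, so the preorder $\leq_\alpha$ restricted to free intervals reduces to $R \leq_\alpha T$ iff $\sum_i (\min_R)_i \leq \sum_i (\min_T)_i$. An analogous collapse happens for interleavings: $\mathbb{I}^{\langle p\rangle}$ and $\mathbb{I}^{\langle q\rangle}$ are $\eta$-interleaved iff $|p_i - q_i| \leq \eta$ for all $i$, since the condition at the $\max$ side is automatic.

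First I would apply Corollary \ref{minmaxineq} to the nonzero morphisms $f : \mathbb{I}^R \to \mathbb{I}^S(\epsilon)$ and $g : \mathbb{I}^S \to \mathbb{I}^T(\epsilon)$ to obtain $\min_S \leq \min_R + \epsilon$ and $\min_T \leq \min_S + \epsilon$, that is, $(\min_S)_i \leq (\min_R)_i + \epsilon$ and $(\min_T)_i \leq (\min_S)_i + \epsilon$ in each coordinate $i$. Assuming $n \geq 2$ we have $\epsilon \leq (n-1)\epsilon$, so these already give one direction of the sought $(n-1)\epsilon$-interleavings; only the reverse direction can fail.

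Now assume that $\mathbb{I}^R$ and $\mathbb{I}^S$ are not $(n-1)\epsilon$-interleaved. By the previous paragraph the failure must occur in the direction $\min_R \leq \min_S + (n-1)\epsilon$, so there is some coordinate $m$ with $(\min_S)_m < (\min_R)_m - (n-1)\epsilon$. Summing this strict bound at $m$ with the coordinatewise bounds $(\min_S)_i \leq (\min_R)_i + \epsilon$ for $i \neq m$ yields
\begin{align*}
\alpha(S) &= \sum_{1 \leq i \leq n} (\min_S)_i \\
&< (\min_R)_m - (n-1)\epsilon + \sum_{i \neq m}\bigl((\min_R)_i + \epsilon\bigr) \\
&= \alpha(R) + (n-1)\epsilon - (n-1)\epsilon = \alpha(R),
\end{align*}
so $S <_\alpha R$. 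The symmetric argument applied to $g$ shows that if $\mathbb{I}^S$ and $\mathbb{I}^T$ are not $(n-1)\epsilon$-interleaved, then $T <_\alpha S$. If neither interleaving holds, chaining gives $T <_\alpha R$, contradicting $R \leq_\alpha T$; hence at least one of them must hold.

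I do not anticipate any real obstacle: the argument is a single coordinatewise pigeonhole, where one ``bad'' coordinate drops by strictly more than $(n-1)\epsilon$ while the remaining $n-1$ coordinates rise by at most $\epsilon$ each, and these contributions cancel exactly in the sum defining $\alpha$. The only subtle point is keeping track of when inequalities are strict versus weak (the strict inequality at $m$ is what forces $\alpha(S) < \alpha(R)$, which is all we need since the $P$-tiebreaker is vacuous for free intervals), and the sole place the hypothesis $n \geq 2$ enters is in ensuring that Corollary \ref{minmaxineq} alone secures the ``easy'' half of the interleaving condition.
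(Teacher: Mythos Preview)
Your proposal is correct and follows essentially the same route as the paper's proof: use Corollary \ref{minmaxineq} to get $\min_S \leq \min_R + \epsilon$, assume the $(n-1)\epsilon$-interleaving fails in one coordinate $m$, sum the resulting inequalities to obtain $\alpha(S) < \alpha(R)$, and then symmetrize to reach the contradiction $\alpha(T) < \alpha(R)$. Your extra remarks---that $P$ is constant on free intervals so the tiebreaker is vacuous, and that $n \geq 2$ is exactly what makes the ``easy'' half of the interleaving automatic---are accurate elaborations that the paper leaves implicit.
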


\begin{proof}
In this proof, we treat $\textrm{min}_I$ and $\textrm{max}_I$ as undecorated points for all free intervals $I$, so that we can add them. We have $\textrm{min}_S \leq \textrm{min}_R + \epsilon$. Suppose $\mathbb{I}^R$ and $\mathbb{I}^S$ are not $(n-1)\epsilon$-interleaved. Then $\textrm{min}_S + (n-1)\epsilon \ngeq \textrm{min}_R$, so for some $m$, we must have $\textrm{min}_{S_m} < \textrm{min}_{R_m} - (n-1)\epsilon$. We get
\begin{align}
\begin{split}
\alpha(S) &= \sum_{1 \leq i \leq n} \textrm{min}_{S_i} \\
&< \textrm{min}_{R_m} - (n-1)\epsilon + \sum_{i \neq m} \left( \textrm{min}_{R_i} + \epsilon \right) \\
&= \sum_{1 \leq i \leq n} \textrm{min}_{R_i} \\
&= \alpha(R).
\end{split}
\end{align}
We can also prove that $\alpha(T) < \alpha(S)$ if $\mathbb{I}^S$ and $\mathbb{I}^T$ are not $(n-1)\epsilon$-interleaved, so we have $\alpha(T) < \alpha(R)$, a contradiction.
\end{proof}

\section{Counterexamples to a general algebraic stability theorem}
\label{tight}

Theorem \ref{main} gives an upper bound of $(2n-1)$ on $d_B/d_I$ for rectangle decomposable modules that increases with the dimension. An obvious question is whether it is possible to improve this constant, or if for each $C < 2(n-1)$ there exist pairs $M,N$ of modules for which $d_B(M,N) > Cd_I(M,N)$, in which case the bound is optimal. We know that $d_B(M,N) \geq d_I(M,N)$ for any $M$ and $N$ whenever the bottleneck distance is defined, so for $n = 1$, the constant is optimal. For $n > 1$, however, it turns out that the equality $d_B(M,N) = d_I(M,N)$ does not always hold, and the geometry becomes more confusing when $n$ increases. In dimension $2$, we give an example of rectangle decomposable modules $M$ and $N$ with $d_B(M,N) = 3d_I(M,N)$ in Example \ref{example3}, which means that the bound is optimal for $n=2$, as well. This is a counterexample to a conjecture made in a previous version of \cite{zigzag} which claims that interval decomposable $\mathbb{R}^n$-modules $M$ and $N$ such that $B(M)$ and $B(N)$ only contain convex intervals are $\epsilon$-matched if they are $\epsilon$-interleaved.

\begin{example}
\label{example2}
Let $B(M) = \{I_1, I_2, I_3 \}$\footnote{Here we use subscripts to index different intervals, not to indicate projections, as we did earlier.} and $B(N) = \{J \}$, where
\begin{itemize}
\item $I_1 = (-3,1) \times (-1,3)$
\item $I_2 = (-1,3) \times (-3,1)$
\item $I_3 = (-1,1) \times (-1,1)$
\item $J = (-2,2) \times (-2,2)$.
\end{itemize}

\begin{figure}
\centering
\includegraphics[scale=0.5]{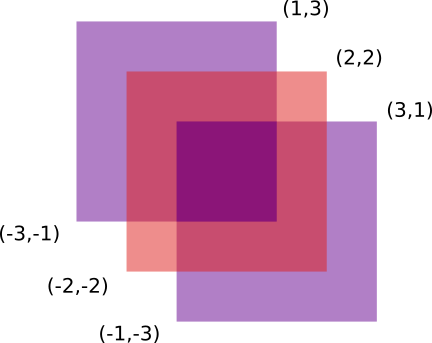}
\caption{$M$ and $N$.~$I_1$ and $I_2$ are the light purple squares,~$I_3$ is deep purple, and $J$ is pink.\label{fig:C=2}}
\end{figure}

See Figure \ref{fig:C=2}. We can define $1$-interleaving morphisms $f: M \rightarrow N(1)$ and $g: N \rightarrow M(1)$ by letting $w(I_1,J) = w(I_2,J) = w(I_3,J) = w(J,I_1) = w(J,I_2) = 1$ and $w(J,I_3) = -1$, where $w$ is defined as in the proof of Theorem \ref{main}. On the other hand, in any matching between $B(M)$ and $B(N)$ we have to leave either $I_1$ or $I_2$ unmatched, and they are $\epsilon$-significant for all $\epsilon < 4$. In fact, any possible matching between $B(M)$ and $B(N)$ is a $2$-matching. Thus $d_I(M,N) = 1$ and $d_B(M,N) = 2$.
\end{example}

A crucial point is that even though $w(I_1,J)$,~$w(J,I_2)$,~$w(I_2,J)$, and $w(J,I_1)$ are all nonzero, both $g_{J,I_2} \circ f_{I_1, J}$ and $g_{J,I_1} \circ f_{I_2, J}$ are zero. To do the same with one-dimensional intervals, we would have to shrink $I_1$ and $I_2$ so much that they no longer would be $2$-significant (see Lemma \ref{nonzero}), and then they would not need to be matched in a $1$-matching. This shows how the geometry of higher dimensions can allow us to construct examples that would not work in lower dimensions.

Next, we give an example of rectangle decomposable $\mathbb{R}^2$-modules $M$ and $N$ such that $d_B(M,N) = 3d_I(M,N)$, proving that our upper bound of $2(n-1)$ is the best possible for $n=2$.

\begin{example}
\label{example3}
Let $B(M) = \{I_1, I_2, I_3 \}$ and $B(N) = \{J_1, J_2, J_3 \}$, where
\begin{itemize}
\item $I_1 = (0,10) \times (1,11)$
\item $I_2 = (0,12) \times (-1,11)$
\item $I_3 = (2,10) \times (1,9)$
\item $J_1 = (1,11) \times (0,10)$
\item $J_2 = (1,9) \times (0,12)$
\item $J_3 = (-1,11) \times (2,10)$.
\end{itemize}
The rectangles in $B(M)$ and $B(N)$ are shown in Figure \ref{fig:dB3}.

\begin{figure}
\centering
\includegraphics[scale=0.18]{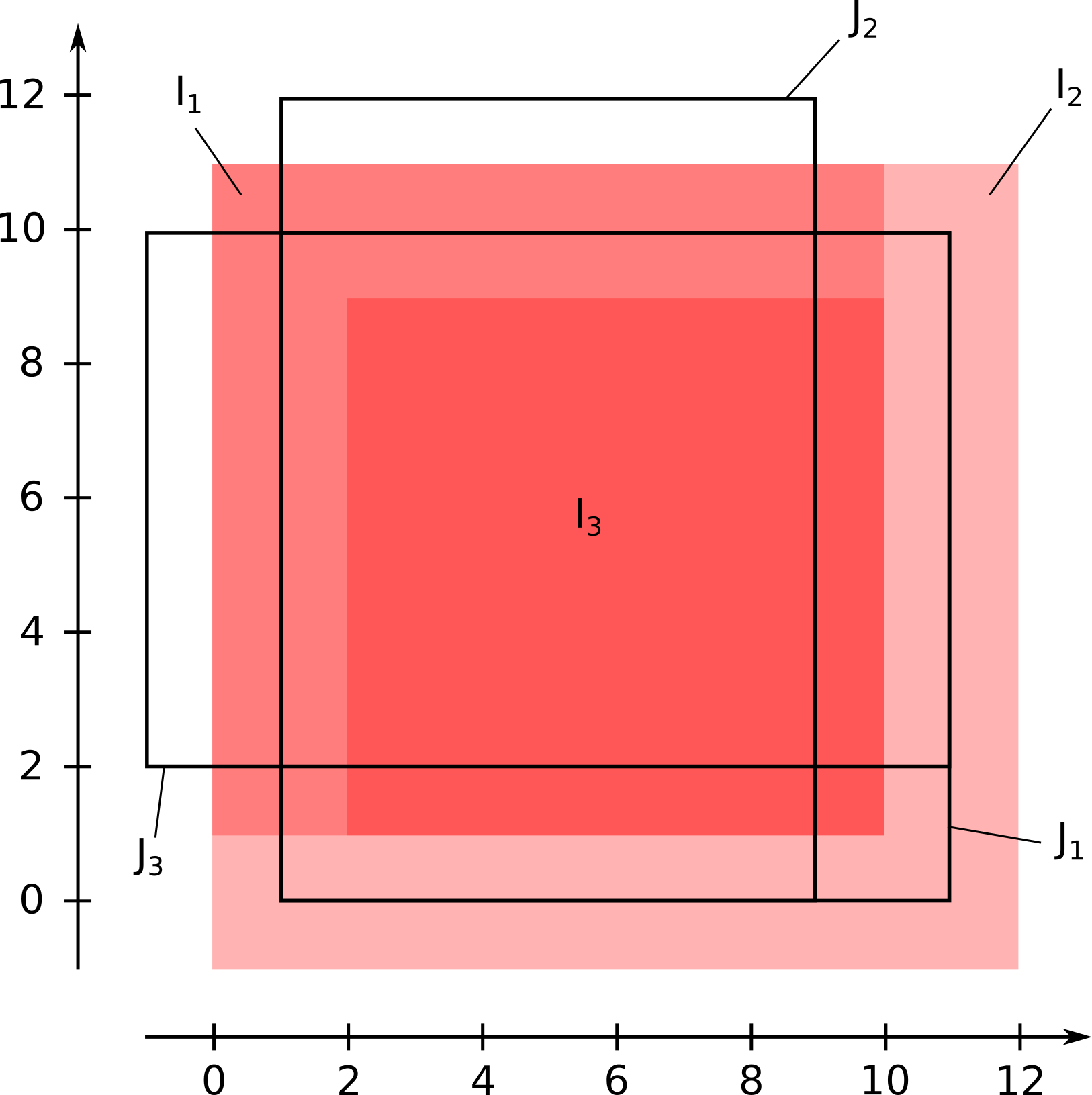}
\caption{$I_1$, $I_2$, and $I_3$ are the filled pink rectangles, and $J_1$, $J_2$, and $J_3$ are the black rectangles without fill.\label{fig:dB3}}
\end{figure}

We give an example of $1$-interleaving morphisms $f$ and $g$ that we write on matrix form. In the first matrix,~$w(I_i,J_j)$ is in row $i$, column $j$. In the second,~$w(J_j, I_i)$ is in row $j$, column $i$.

\begin{align}
&f: 
\begin{bmatrix}
1&1&1 \\
1&1&0 \\
1&0&1
\end{bmatrix}, &g:
\begin{bmatrix}
-1&1&1 \\
1&0&-1 \\
1&-1&0
\end{bmatrix}.
\end{align}

This means that $M$ and $N$ are $1$-interleaved, but they are not $\epsilon$-interleaved for any $\epsilon < 1$, so $d_I(M,N) = 1$.

Let $\epsilon < 3$. We see that the difference between $\textrm{max}_{I_2} = (12,11)$ and $\textrm{max}_{J_2} = (9,12)$ is $3$ in the first coordinate, so $I_2$ and $J_2$ are not $\epsilon$-interleaved, and thus they cannot be matched in an $\epsilon$-matching. In fact, $I_i$ and $J_j$ cannot be matched in an $\epsilon$-matching for any $i,j \in \{2,3\}$ by similar arguments. Since $I_2$ and $I_3$ cannot both be matched with $J_1$, one of them has to be left unmatched, but since both $I_2$ and $I_3$ are $6$-significant, this means that there is no $\epsilon$-matching between $B(M)$ and $B(N)$. On the other hand, any bijection between $B(M)$ and $B(N)$ is a $3$-matching, so $d_B(M,N) = 3$.
\end{example}

There is a strong connection between $n$-dimensional rectangle decomposable modules and $2n$-dimensional free modules. This is related to the fact that we need $2n$ coordinates to determine an $n$-dimensional rectangle, and also $2n$ coordinates to determine a $2n$-dimensional free interval. The following example illustrates this connection, as we simply rearrange the coordinates of $\textrm{min}_{R}$, $\textrm{max}_{R}$ for all rectangles $R$ involved in Example \ref{example3} to get $4$-dimensional free modules with similar properties as in Example \ref{example3}.

\begin{example}

\label{example3free}
Let $B(M) = \{I_1, I_2, I_3 \}$ and $B(N) = \{J_1, J_2, J_3 \}$, where
\begin{itemize}
\item $I_1 = \langle (0,1,10,11) \rangle$
\item $I_2 = \langle (0,-1,12,11) \rangle$
\item $I_3 = \langle (2,1,10,9) \rangle$
\item $J_1 = \langle (1,0,11,10) \rangle$
\item $J_2 = \langle (1,0,9,12) \rangle$
\item $J_3 = \langle (-1,2,11,10) \rangle$.
\end{itemize}
(Compare with the intervals $I_i$ and $J_j$ in Example \ref{example3}.) We have $1$-interleaving morphisms defined the same way as in Example \ref{example3}. Just as in that example, we can deduce that there is nothing better than a $3$-matching between $B(M)$ and $B(N)$, so $d_B(M,N) = 3$ and $d_I(M,N) = 1$.
\end{example}
As a consequence of this example, we get that our upper bound of $d_B/d_I \leq n-1$ for free $n$-dimensional modules cannot be improved on for $n=4$.

\section{Relation to the complexity of calculating interleaving distance}
\label{complexity}

The interleaving distance between arbitrary persistence modules is on the surface not easy to find, as naively trying to construct interleaving morphisms can quickly lead you to consider a complicated set of equations for which it is not clear that one can decide if there is a solution in polynomial time. For $\R$-modules, however, the interval decomposition theorem plus the algebraic stability theorem gives us a polynomial time algorithm to compute $d_I$: decompose the modules into intervals and find the bottleneck distance. Since $d_I = d_B$, this gives us the interleaving distance. When it exists, one can compute the bottleneck distance in polynomial time also in two dimensions \cite{dey2018computing}, but the approach fails for general $\R^n$-modules already at the first step, as we do not have a nice decomposition theorem. But in the recent proof that calculating interleaving distance is NP-hard \cite{bjerkevik2018computing}, it is the failure of the second step that is exploited. Specifically, a set of modules that decompose nicely into interval modules (\emph{staircase modules}, to be precise) is constructed, but for these, $d_I$ and $d_B$ are different. It turns out that calculating $d_I$ for these corresponds to deciding whether \emph{CI problems} are solvable, which is shown to be NP-hard.

Though rectangle modules are not considered in the NP-hardness proof, they have similar properties to staircase modules,\footnote{The only significant difference in this setting is that in a fixed dimension, rectangle modules are defined by a limited number of coordinates, or ``degrees of freedom'', while there is no such restriction on staircase modules even in dimension $2$.} and Example \ref{example3} is essentially a CI problem with a corresponding pair of modules. Importantly, it shows that $d_I = d_B$ does not hold in general for modules corresponding to CI problems. This crucial observation, which appeared first in a preprint of this paper \cite{myself}, opened the door to proving NP-hardness of calculating $d_I$ by the approach used in \cite{bjerkevik2018computing}.

In \cite{bjerkevik2018computing}, it is also shown that \emph{$c$-approximating} $d_I$ is NP-hard for $c<3$, where an algorithm is said to $c$-approximate $d_I$ if it returns a number in the interval $[d_I(M,N), cd_I(M,N)]$ for any input pair $M$, $N$ of modules. Whether the approach by CI problems can be used to prove hardness of $c$-approximation for $c\geq 3$ is closely related to \cref{main}. It can be shown that if $d_B(M,N) \leq cd_I(M,N)$ for any pair $M$, $N$ of rectangle decomposable modules, the same holds for staircase modules, and therefore there is a polynomial time algorithm $c$-approximating $d_I$ for these, meaning that the strategy of going through CI problems will not give a proof that $c$-approximation of $d_I$ is NP-hard. On the other hand, if one can find an example of rectangle decomposable modules $M$ and $N$ such that $d_B(M,N) = cd_I(M,N)$ for $c>3$, one might be able to use that to increase the constant $3$ in the approximation hardness result. Thus there is a strong link between stability of rectangle decomposable modules and the only successful method so far known to the author of determining the complexity of computing or approximating multiparameter interleaving distance.

\section{Acknowledgements}
I would like to thank my supervisors Gereon Quick and Nils Baas for invaluable support and help. I would also like to thank Peter Landweber for detailed comments on several drafts of this text, Steve Oudot for feedback on the first arXiv version and Magnus Bakke Botnan for interesting discussions.

\bibliography{bibfile}

\begin{thebibliography}{10}

\bibitem{theBook}
Martin Aigner and G{\"{u}}nter~M. Ziegler.
\newblock {\em Proofs from THE BOOK}.
\newblock Springer, Berlin, 4th edition, 2010.

\bibitem{azumaya}
Gor{\^o} Azumaya.
\newblock Corrections and supplementaries to my paper concerning
  {K}rull-{R}emak-{S}chmidt's theorem.
\newblock {\em Nagoya Mathematical Journal}, 1:117--124, 1950.

\bibitem{zigzag}
Magnus Bakke~Botnan and Michael Lesnick.
\newblock Algebraic stability of zigzag persistence modules.
\newblock {\em Algebraic \& Geometric Topology}, 18, 04 2016.

\bibitem{bauer2014measuring}
Ulrich Bauer, Xiaoyin Ge, and Yusu Wang.
\newblock Measuring distance between reeb graphs.
\newblock In {\em Proceedings of the thirtieth annual symposium on
  Computational geometry}, page 464. ACM, 2014.

\bibitem{induced}
Ulrich Bauer and Michael Lesnick.
\newblock Induced matchings and the algebraic stability of persistence
  barcodes.
\newblock {\em Journal of Computational Geometry}, 6(2):162--191, 2015.

\bibitem{bauer2014strong}
Ulrich Bauer, Elizabeth Munch, and Yusu Wang.
\newblock {Strong Equivalence of the Interleaving and Functional Distortion
  Metrics for Reeb Graphs}.
\newblock In {\em 31st International Symposium on Computational Geometry (SoCG
  2015)}, volume~34 of {\em Leibniz International Proceedings in Informatics
  (LIPIcs)}, pages 461--475. Schloss Dagstuhl--Leibniz-Zentrum fuer Informatik,
  2015.

\bibitem{bendich2013homology}
Paul Bendich, Herbert Edelsbrunner, Dmitriy Morozov, Amit Patel, et~al.
\newblock Homology and robustness of level and interlevel sets.
\newblock {\em Homology, Homotopy and Applications}, 15(1):51--72, 2013.

\bibitem{myself}
H{\aa}vard~Bakke Bjerkevik.
\newblock Stability of higher-dimensional interval decomposable persistence
  modules.
\newblock {\em arXiv preprint arXiv:1609.02086v2}, 2016.

\bibitem{bjerkevik2018computing}
H{\aa}vard~Bakke Bjerkevik, Magnus~Bakke Botnan, and Michael Kerber.
\newblock Computing the interleaving distance is np-hard.
\newblock {\em arXiv preprint arXiv:1811.09165}, 2018.

\bibitem{carlsson2010zigzag}
Gunnar Carlsson and Vin De~Silva.
\newblock Zigzag persistence.
\newblock {\em Foundations of computational mathematics}, 10(4):367--405, 2010.

\bibitem{carlsson2009zigzag}
Gunnar Carlsson, Vin De~Silva, and Dmitriy Morozov.
\newblock Zigzag persistent homology and real-valued functions.
\newblock In {\em Proceedings of the twenty-fifth annual symposium on
  Computational geometry}, pages 247--256. ACM, 2009.

\bibitem{proximity}
Fr{\'e}d{\'e}ric Chazal, David Cohen-Steiner, Marc Glisse, Leonidas~J Guibas,
  and Steve~Y Oudot.
\newblock Proximity of persistence modules and their diagrams.
\newblock In {\em Proceedings of the twenty-fifth annual symposium on
  Computational geometry}, pages 237--246. ACM, 2009.

\bibitem{structure}
Fr{\'e}d{\'e}ric Chazal, Vin De~Silva, Marc Glisse, and Steve Oudot.
\newblock {\em The structure and stability of persistence modules}.
\newblock Springer, 2016.

\bibitem{cohen07}
David Cohen-Steiner, Herbert Edelsbrunner, and John Harer.
\newblock Stability of persistence diagrams.
\newblock {\em Discrete \& Computational Geometry}, 37(1):103--120, 2007.

\bibitem{crawley}
William Crawley-Boevey.
\newblock Decomposition of pointwise finite-dimensional persistence modules.
\newblock {\em Journal of Algebra and Its Applications}, 14(05):1550066, 2015.

\bibitem{de2016categorified}
Vin De~Silva, Elizabeth Munch, and Amit Patel.
\newblock Categorified reeb graphs.
\newblock {\em Discrete \& Computational Geometry}, 55(4):854--906, 2016.

\bibitem{dey2018computing}
Tamal~K. Dey and Cheng Xin.
\newblock {Computing Bottleneck Distance for 2-D Interval Decomposable
  Modules}.
\newblock In {\em 34th International Symposium on Computational Geometry (SoCG
  2018)}, volume~99 of {\em Leibniz International Proceedings in Informatics
  (LIPIcs)}, pages 32:1--32:15. Schloss Dagstuhl--Leibniz-Zentrum fuer
  Informatik, 2018.

\bibitem{hall35}
Philip Hall.
\newblock On representatives of subsets.
\newblock {\em J. London Math. Soc}, 10(1):26--30, 1935.

\bibitem{kim2017stable}
Woojin Kim and Facundo Memoli.
\newblock Stable signatures for dynamic metric spaces via zigzag persistent
  homology.
\newblock {\em arXiv preprint arXiv:1712.04064}, 2017.

\bibitem{oudot2015zigzag}
Steve~Y Oudot and Donald~R Sheehy.
\newblock Zigzag zoology: Rips zigzags for homology inference.
\newblock {\em Foundations of Computational Mathematics}, 15(5):1151--1186,
  2015.

\bibitem{webb}
Cary Webb.
\newblock Decomposition of graded modules.
\newblock {\em Proceedings of the American Mathematical Society},
  94(4):565--571, 1985.

\end{thebibliography}
\bibliographystyle{plain}
\end{document}